\newcommand{\todoing}[1]{\todo[inline,color=green!20, linecolor=orange!250]{\small#1}}
\newcommand{\sashatodo}[1]{\todoing{\textbf{From Sasha:} #1}}
\definecolor{gnred}{RGB}{255,91,89}
\definecolor{gnred1}{RGB}{71,0,0} 
\definecolor{gnred2}{RGB}{117,0,0} 
\definecolor{gnred3}{RGB}{164,0,0} 
\definecolor{gnred4}{RGB}{211,0,0} 
\definecolor{gnred5}{RGB}{255,0,0} 
\definecolor{gnred6}{RGB}{255,42,34} 
\definecolor{gnred7}{RGB}{255,91,89} 
\definecolor{gnblue1}{RGB}{0,36,71}   
\definecolor{gnblue2}{RGB}{0,60,118}  
\definecolor{gnblue3}{RGB}{0,85,164}  
\definecolor{gnblue4}{RGB}{0,108,212} 
\definecolor{gnblue5}{RGB}{0,133,255}  
\definecolor{gnblue6}{RGB}{35,156,255} 
\definecolor{gnblue7}{RGB}{88,177,255} 
\definecolor{gnbrown1}{RGB}{71,27,0}  
\definecolor{gnbrown2}{RGB}{117,45,0} 
\definecolor{gnbrown3}{RGB}{164,62,0} 
\definecolor{gnbrown4}{RGB}{211,80,0} 
\definecolor{gnbrown5}{RGB}{255,97,0} 
\definecolor{gnbrown6}{RGB}{255,127,26} 
\definecolor{gnbrown7}{RGB}{255,155,86} 
\newcounter{saveenum}
\newcommand\Item[1][]{%
  \ifx\relax#1\relax  \item \else \item[#1] \fi
  \abovedisplayskip=0pt\abovedisplayshortskip=0pt~\vspace*{-\baselineskip}}
\newtheoremstyle{ieeeconf}
  {0pt}   
  {0pt}   
  {\normalfont}  
  {\parindent}       
  {\itshape} 
  {:}         
  { } 
  {\thmname{#1} \thmnumber{#2}\thmnote{ (#3)}} 
\renewenvironment{proof}[1][\proofname]{\par
  \pushQED{\qed}%
  \normalfont \topsep\z@
  \trivlist
  \item[\hskip2em
        \itshape
    #1\@addpunct{:}]\ignorespaces
}{%
  \popQED\endtrivlist\@endpefalse
}
\theoremstyle{ieeeconf}
\newtheorem{theorem}{Theorem}
\newtheorem{lemma}[theorem]{Lemma}
\newtheorem{prop}[theorem]{Proposition}
\newtheorem{corollary}[theorem]{Corollary}
\newtheorem{definition}[theorem]{Definition}
\newtheorem{remark} [theorem]{Remark}
\newtheorem{example}[theorem]{Example}
\newcommand{\until}[1]{\{1,\dots, #1\}}
\newcommand{\subscr}[2]{#1_{\textup{#2}}}
\newcommand{\setdef}[2]{\{#1 \; | \; #2\}}
\newcommand{\map}[3]{#1: #2 \rightarrow #3}
\newcommand{\real}{\ensuremath{\mathbb{R}}}
\newcommand{\realpositive}{\ensuremath{\mathbb{R}}_{>0}}
\newcommand{\realnonnegative}{\ensuremath{\mathbb{R}}_{\ge 0}}
\newcommand\oprocendsymbol{\hbox{$\square$}}
\newcommand\oprocend{\relax\ifmmode\else\unskip\hfill\fi\oprocendsymbol}
\DeclareSymbolFont{bbold}{U}{bbold}{m}{n}
\DeclareSymbolFontAlphabet{\mathbbold}{bbold}
\newcommand{\vect}[1]{\mathbbold{#1}}
\newcommand{\vectorones}[1][]{\vect{1}_{#1}}
\newcommand{\vectorzeros}[1][]{\vect{0}_{#1}}
\newcommand{\e}{\textrm{e}}
\newcommand*{\mydoi}[1]{\href{http://dx.doi.org/#1}{\includegraphics[width=.75em]{doi.png}}}
\newcommand{\jac}[1]{D\mkern-0.75mu{#1}}
\newcommand{\seminorm}[1]{{\left\vert\kern-0.25ex\left\vert\kern-0.25ex\left\vert #1
		\right\vert\kern-0.25ex\right\vert\kern-0.25ex\right\vert}}
\newcommand{\semimeasure}[1]{\mu_{\seminorm{\cdot}}\kern-0.5ex\left(#1\right)}
\DeclareMathOperator*{\argmin}{arg\,min}
\DeclareMathOperator{\proj}{Proj}
\DeclareMathOperator{\dom}{Dom}
\DeclareMathOperator{\interior}{int}
\newcommand{\mcX}{\mathcal{X}}
\newcommand{\mcU}{\mathcal{U}}
\newcommand{\mcY}{\mathcal{Y}}
\newcommand{\mcC}{\mathcal{C}}
\newcommand{\mcK}{\mathcal{K}}
\newcommand{\mcO}{\mathcal{O}}
\newcommand{\sat}{\mathrm{sat}}
\title{Exponential Stability of Parametric Optimization-Based Controllers \\ via Lur'e Contractivity}
\author{Alexander Davydov and Francesco Bullo\thanks{This work was in part supported by AFOSR project FA9550-21-1-0203 and NSF Graduate Research Fellowship under Grant 2139319. The first author thanks Pol Mestres and Anton Proskurnikov for insightful conversations.}%
\thanks{Alexander Davydov, and Francesco Bullo are with the Center for Control, Dynamical 
Systems, and Computation, UC Santa Barbara, Santa Barbara, CA 93106 USA. {\tt\small 
davydov@ucsb.edu, bullo@ucsb.edu}.}}
\begin{document}
\maketitle
\thispagestyle{empty}
\pagestyle{empty}

\begin{abstract}
  In this letter, we investigate sufficient conditions for the exponential
  stability of LTI systems driven by controllers derived from parametric
  optimization problems. Our primary focus is on parametric projection
  controllers, namely parametric programs whose objective function is the
  squared distance to a nominal controller. Leveraging the virtual system
  method of analysis and a novel contractivity result for Lur'e systems, we
  establish a sufficient LMI condition for the exponential stability of an
  LTI system with a parametric projection-based controller. Separately, we
  prove additional results for single-integrator systems. Finally, we apply
  our results to state-dependent saturated control systems and control
  barrier function-based control and provide numerical simulations.
\end{abstract}

\section{Introduction}
Controllers derived from optimization problems are ubiquitous in systems
and control.
One large class of optimization-based controllers are based upon (i)
solving an optimal control problem offline, such as LQR, LQG, or
Hamilton-Jacobi PDE and (ii) closing the loop with the resulting
controller.
Recent interest has focused on a different class of optimization-based
controllers, that solve optimization problems at every time-step of the
dynamic evolution of the plant. Namely, such controllers are solutions to
\emph{parametric optimization problems}, i.e., programs that are functions
of the state of the system. Examples of these controllers include
model-predictive control~\cite{FB-AB-MM:17}, online feedback
optimization~\cite{GB-JC-JIP-EDA:22}, and control barrier (or Lyapunov)
function-based control~\cite{ADA-XX-JWG-PT:17}.
While stability and robustness properties of the first class of
optimization-based controllers are well understood, fewer studies have
focused on stability and robustness properties of parametric
optimization-based controllers.


\textit{Literature Review:} Parametric optimization is a rich subdiscipline of optimization which studies solutions of optimization problems as a function of a parameter; see the textbook~\cite{BB-JG-DK-BK-KT:82}. Parametric optimization is ubiquitous in systems and control, especially in model predictive control~\cite{FB-AB-MM:17} and CBF-based control~\cite{ADA-XX-JWG-PT:17}. Closed-form solutions for certain classes of parametric programs were studied in~\cite[Chapter~5]{FB-AB-MM:17}. However, closed-form solutions are not always attainable. Regularity of solutions to parametric programs, namely establishing smoothness properties of their solutions, is a classical problem and has even pervaded systems and control~\cite{BJM-MJP-ADA:15,PM-AA-JC:23}. Compared to regularity results, there are fewer results on the stability of control systems with parametric optimization-based controllers. 

One class of systems for which there have been results on stability and safety of systems driven by parametric optimization-based controllers are those coming from CBFs and control Lyapunov functions (CLFs)~\cite{ADA-XX-JWG-PT:17}. In these works, CLF and CBF constraints are jointly enforced in a state-dependent quadratic program (QP). To guarantee feasibility of the QP when the CLF and CBF inequalities cannot be jointly satisfied, the stability is commonly relaxed by introducing a slack variable. This relaxation results in a lack of stability guarantee even for arbitrarily large penalties on the slack variable~\cite{MJ:18}. Recent work,~\cite{PM-JC:23}, studied a variant of the CLF-CBF QP controller and demonstrates how to tune the penalty parameter and how to estimate the basin of attraction of the origin.



\textit{Contributions:} We consider LTI
systems equipped with parametric projection-based controllers. 
As our main contribution, assuming
linearity of the nominal controller and various well-posedness conditions,
we obtain LMI-based sufficient conditions for exponential stability and the
existence of a global Lyapunov function.

Our analysis is based upon the virtual system methods in contraction
theory and contractivity of Lur'e systems. For context, contraction theory 
is a computationally-friendly
notion of robust nonlinear stability~\cite{FB:23-CTDS} and the virtual
system method, first proposed in~\cite{WW-JJES:05}, is an analysis approach
to establish exponential convergence for systems satisfying certain weak
contractivity properties. As a tutorial contribution, we provide a novel
review of the virtual system method in Section~\ref{sec:virtual-sys}.
Specifically, we show that LTI systems with parametric
projection-based controllers are in Lur'e form with state-dependent
nonlinearity and that an appropriate virtual system can be designed in
standard Lur'e form.

As our second main contribution, we establish in Theorem~\ref{prop:Lure1} a
novel necessary and sufficient condition for contractivity of Lur'e systems
with cocoercive nonlinearities. In contrast, in~\cite{VA-ST:19} and~\cite[Proposition~4]{MG-VA-ST-DA:23},
monotone and Lipschitz nonlinearities are considered yet only sufficient conditions are provided.
In other words, by studying a smaller class of nonlinearities, we are able to find the sharpest condition for contractivity.
Additionally, our result is stronger
than~\cite[Theorem~4.2]{LDA-MC:13} since our condition is necessary and
sufficient.


As our third main contribution, we study the special LTI case of single
integrators. We establish that all trajectories of the closed-loop system
converge to the set of equilibria and that all trajectories converging to
the origin do so exponentially fast with a known rate.
While there are related results in the CBF/CLF
literature~\cite{MFR-APA-PT:21,XT-DVD:24}, this convergence result for the
general class of parametric projection-based controllers is novel, to the
best of our knowledge.

Finally, we study two applications, namely state-dependent saturated
control systems and CBF-based control. For state-dependent saturated
control systems, the maximal control effort depends on the state of the
system and we demonstrate that our sufficient condition can be readily
applied to yield a condition for global exponential stability. In CBF-based
control, we consider a single integrator avoiding an obstacle and
demonstrate that the results hold and provide evidence that the estimated
exponential rate of convergence is tight. Specifically, we numerically observe
that, in the case of single integrator dynamics, one does not need to
enforce any CLF decrease condition to guarantee stability to the origin.


\section{Prerequisite Material}
\subsection{Contraction Theory}
Given a symmetric positive-definite matrix $P \in \real^{n\times n}$, we let $\|\cdot\|_{P}$ be $P$-weighted $\ell_2$ norm $\|x\|_{P} := \sqrt{x^\top P x}$, $x \in \real^n$ and write $\|\cdot\|_2$ if $P = I_n$. 
Given two normed spaces $\mcX$, $\mcY$ a map $\map{F}{\mcX}{\mcY}$ is Lipschitz with constant $\ell \geq 0$ if for all $x_1,x_2 \in \mcX$, it holds that
$
\|F(x_1)-F(x_2)\|_{\mcY} \leq \ell\|x_1-x_2\|_{\mcX}.
$

Let the vector field $\map{F}{\real_{\geq 0} \times \real^n}{\real^n}$ be continuous in its first argument and Lipschitz in its second, with $(t,x) \mapsto F(t,x)$. Further let $P \in \real^{n \times n}$ be symmetric and positive definite, and let there exist a constant $c >0$ referred as \emph{contraction rate}. We say that $F$ is \emph{strongly infinitesimally contracting with respect to $\|\cdot\|_P$ with rate $c$} if for all $x_1,x_2 \in \real^n$ and $t \geq 0$, 
\begin{equation}\label{eq:contraction-ineq}
	(F(t,x_1)-F(t,x_2))^\top P(x_1-x_2) \leq -c\|x_1-x_2\|_P^2.
\end{equation} 
If $x(\cdot)$ and $y(\cdot)$ are two trajectories satisfying $\dot{x}(t) = F(t,x(t)), \dot{y}(t) = F(t,y(t))$, then $\|x(t) - y(t)\|_P \leq \e^{-c(t-t_0)}\|x(t_0) - y(t_0)\|_P$ for all $t \geq t_0 \geq 0$. 

We refer to~\cite{FB:23-CTDS} for a recent review of these tools.

\newcommand{\odeflowtx}[2]{\phi_{#1}(#2)}
\newcommand{\fvirtual}{\subscr{f}{virtual}}
\subsection{Virtual System Method for Convergence Analysis}\label{sec:virtual-sys}
The \emph{virtual system} analysis approach is a method to study the
asymptotic behavior of a dynamical system that may not enjoy contracting
properties. The virtual system approach was first proposed in~\cite{WW-JJES:05}, but we follow the systematic procedure advocated for in~\cite[Section~5.7]{FB:23-CTDS}. For completeness sake, we describe this procedure below.

The virtual system analysis approach is as follows. We are given
a dynamical system
\begin{equation}
	\dot{x}=f(x), \quad x(0)=x_0\in\real^n
\end{equation}
and we let $\odeflowtx{x_0}{t}$ denote a solution from initial
condition $x(0)=x_0$. The analysis proceeds in three steps:
\begin{enumerate}
	\item \emph{design} a time-varying dynamical system, called the
	\emph{virtual system}, of the form
	\begin{equation}
		\dot{y}=\fvirtual(y,\odeflowtx{x_0}{t}), \quad y\in\real^d
	\end{equation}
	satisfying a strong infinitesimal contractivity property with respect to
	an appropriate norm, e.g., the existence of a positive definite matrix $P \in \real^{d \times d}$ and a scalar $c > 0$ such that for all $y_1,y_2 \in \real^d, z \in \real^{n}$:
	\begin{equation*}
		\hspace{-2em}(\fvirtual(y_1,z)-\fvirtual(y_2,z))^\top P(y_1-y_2) \leq -c\|y_1-y_2\|_P^2;
	\end{equation*}
	(The vector field is called virtual since it is
	different from the nominal vector field, $f$, and does not
	correspond to any physically meaningful variation of $f$.)
	
	\item\label{vs:step:2} \emph{select} two specific solutions of the virtual
	system and state their incremental stability property:
	\begin{equation}
		\|y_1(t) - y_2(t)\|_P \leq \e^{-ct}\|y_1(0)-y_2(0)\|_P;
	\end{equation}
	\item \emph{infer} properties of the trajectory, $\odeflowtx{x_0}{t}$, of the
	nominal system.
\end{enumerate}

For example, if $d=n$ and $f(x)=\fvirtual(x,x)$, then one can see that
$\odeflowtx{x_0}{t}$ is a solution for both systems and is often selected
as one of the two specific solutions in step~\ref{vs:step:2}. Additionally,
if $\fvirtual(\vectorzeros[n],z) = \vectorzeros[n]$ for all $z \in
\real^n$, then $\vectorzeros[n]$ is an equilibrium point for the virtual
system and can be selected as one of the specific solutions.

\section{Absolute Contractivity of Lur'e Systems}
Consider the Lur'e system
\begin{equation}\label{eq:Lure}
	\dot{x} = Ax + B\varphi(Kx,t),
\end{equation}
where $\map{\varphi}{\real^m \times \realnonnegative}{\real^m}$ is cocoercive in its first argument and continuous in its second argument. Specifically, there exists a constant $\rho > 0$ such that for all $y_1,y_2 \in \real^m, t \geq 0$, 
\begin{equation}\label{eq:monotonicity}
	(\varphi(y_1,t)-\varphi(y_2,t))^\top(y_1-y_2) \geq \rho\|\varphi(y_1,t)-\varphi(y_2,t)\|_2^2.
\end{equation}
Notably, cocoercivity,~\eqref{eq:monotonicity}, implies that $\varphi$ is monotone and Lipschitz continuous with constant $\rho^{-1}$ in its first entry. Many standard nonlinearities satisfy cocoercivity including projections onto convex sets and nonlinearities of the form $\varphi(x) = (\varphi_1(x_1),\dots,\varphi_m(x_m))$ where each $\varphi_i$ is slope-restricted between $0$ and $\rho^{-1}$.

Akin to the classical problem of absolute stability, \emph{absolute contractivity} is the property that the system~\eqref{eq:Lure} is strongly infinitesimally contracting for any nonlinearity $\varphi$ obeying the constraint~\eqref{eq:monotonicity}. 

\begin{theorem}[Necessary and sufficient condition for absolute contractivity]\label{prop:Lure1}
	Consider the Lur'e system~\eqref{eq:Lure} and let $P \in \real^{n \times n}$ be positive definite. The system~\eqref{eq:Lure} is strongly infinitesimally contracting with respect to $\|\cdot\|_{P}$ with rate $\eta > 0$ for any $\varphi$ satisfying~\eqref{eq:monotonicity} if and only if there exists $\lambda \geq 0$ such that
	\begin{equation}\label{eq:LMI-general}
		\begin{bmatrix}
			A^\top P + PA + 2\eta P & PB + \lambda K^\top \\ B^\top P + \lambda K & -2\lambda \rho I_m
		\end{bmatrix} \preceq 0.
	\end{equation}
\end{theorem}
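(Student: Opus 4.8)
The plan is to recognize the statement as an instance of the S-procedure (S-lemma) applied to two homogeneous quadratic forms in the stacked variable $\xi=(\delta^\top,w^\top)^\top\in\real^{n+m}$, where $\delta=x_1-x_2$ and $w=\varphi(Kx_1,t)-\varphi(Kx_2,t)$. First I would rewrite the contractivity requirement. Since the vector field is $F(t,x)=Ax+B\varphi(Kx,t)$, the increment is $F(t,x_1)-F(t,x_2)=A\delta+Bw$, so the contraction inequality~\eqref{eq:contraction-ineq} becomes, after symmetrizing and scaling by $2$, the quadratic inequality $\tau(\delta,w):=\delta^\top(A^\top P+PA+2\eta P)\delta+2\delta^\top PBw\le 0$. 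Meanwhile, evaluating cocoercivity~\eqref{eq:monotonicity} at $y_i=Kx_i$ gives exactly $\sigma(\delta,w):=\delta^\top K^\top w-\rho\, w^\top w\ge 0$. A direct expansion shows that the matrix in~\eqref{eq:LMI-general} is the matrix of the quadratic form $\tau+2\lambda\sigma$, i.e.\ $\xi^\top(\text{LMI matrix})\xi=\tau(\delta,w)+2\lambda\sigma(\delta,w)$, so that the LMI $\preceq 0$ is equivalent to $\tau+2\lambda\sigma\le 0$ for all $\xi$.

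The sufficiency direction is then immediate: if such $\lambda\ge 0$ exists, then for any cocoercive $\varphi$ and any $x_1,x_2,t$ the pair $(\delta,w)$ satisfies $\sigma(\delta,w)\ge 0$, whence $\tau(\delta,w)\le -2\lambda\sigma(\delta,w)\le 0$, which is precisely the contraction inequality. No S-lemma is needed for this direction.

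The necessity direction is where the work lies, and it splits into two ingredients. (a) A \emph{realizability} lemma: I claim the set of increments $(\delta,w)$ achievable by some cocoercive $\varphi$ is exactly the cone $\{\sigma\ge 0\}$. The inclusion ``achievable $\subseteq\{\sigma\ge 0\}$'' is cocoercivity; for the converse I would exhibit, for each $(\delta,w)$ with $\sigma(\delta,w)\ge 0$, an explicit \emph{linear} cocoercive map $\varphi(y)=\rho^{-1}My$ with $M$ symmetric, $0\preceq M\preceq I$, and $M(K\delta)=\rho w$. Such an $M$ is cocoercive with constant $\rho$ precisely because $0\preceq M\preceq I$ forces $M\succeq M^2$, and it exists precisely because $\sigma(\delta,w)\ge 0$ rewrites as $\langle K\delta,\rho w\rangle\ge\|\rho w\|_2^2$, which is the feasibility condition for the $2\times 2$ reduction of $M$ onto $\mathrm{span}(K\delta,w)$ (with $M=0$ on the orthogonal complement). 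Consequently, absolute contractivity is equivalent to the quadratic implication $\sigma(\xi)\ge 0\Rightarrow\tau(\xi)\le 0$ holding for all $\xi\in\real^{n+m}$. (b) The S-lemma then produces a nonnegative multiplier $\lambda$ with $\tau+2\lambda\sigma\le 0$ everywhere, i.e.\ the LMI, provided the Slater condition is met; I would verify $\exists\,\bar\xi$ with $\sigma(\bar\xi)>0$ whenever $K\neq 0$ (take $w\neq 0$ and $\delta=tK^\top w$ with $t$ large), and dispatch the degenerate case $K=0$ directly, where the state-increment in $w$ vanishes identically and the condition collapses to $A^\top P+PA+2\eta P\preceq 0$.

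The main obstacle is the realizability lemma in (a): one must show the cocoercivity constraint is \emph{tight}, so that no admissible $(\delta,w)$ is omitted and the S-lemma can legitimately be invoked over the entire constraint cone rather than over the smaller set of increments one might naively expect. The explicit construction of $M$, reduced to a two-dimensional positive-semidefinite feasibility check, is the crux that makes this tightness work.
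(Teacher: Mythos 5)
Your proposal follows essentially the same route as the paper: both reduce the claim to the implication between the two homogeneous quadratic forms $\sigma(\delta,w)\ge 0$ (cocoercivity of the increment) and $\tau(\delta,w)\le 0$ (the contraction inequality) and then invoke the lossless S-procedure to obtain the multiplier $\lambda\ge 0$ and the LMI~\eqref{eq:LMI-general}. The paper's proof is a terse citation of the S-procedure, whereas you additionally supply the two ingredients it leaves implicit --- the realizability of every pair in the cone $\{\sigma\ge 0\}$ by an explicit linear cocoercive map $\varphi(y)=\rho^{-1}My$ with $0\preceq M\preceq I$, and the Slater point needed for losslessness --- so your argument is correct and, if anything, more complete (modulo the degenerate $K=0$ case, where the LMI does not quite collapse to $A^\top P+PA+2\eta P\preceq 0$ because of the off-diagonal block $PB$, an edge case the paper does not address either).
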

\begin{proof}
  Employing the shorthand $\Delta x = x_1-x_2, \Delta y = y_1-y_2 = K\Delta
  x, \Delta u_t = \varphi(y_1,t)-\varphi(y_2,t)$, the contractivity
  condition~\eqref{eq:contraction-ineq} for the system~\eqref{eq:Lure} is
  equivalently rewritten as
  \begin{equation}\label{eq:quad-ineq-contraction}
    \Delta x^\top (PA + A^\top P + 2\eta P)\Delta x + 2\Delta x^\top PB \Delta u_t \leq 0.
  \end{equation}
  Moreover, the cocoercivity
  condition~\eqref{eq:monotonicity} is equivalent to
  \begin{align}\label{eq:quad-ineq-monotone}
    & \Delta u_t^\top(\rho\Delta u_t - K\Delta x) \leq 0.
  \end{align}
 Asking when the
  inequality~\eqref{eq:quad-ineq-monotone}
  implies~\eqref{eq:quad-ineq-contraction} is equivalent to the
  inequality~\eqref{eq:LMI-general} in light of the necessity and
  sufficiency of the S-procedure~\cite{IP-TT:07}.
\end{proof}
Note that the condition in~\cite[Theorem~2]{VA-ST:19} corresponds to the inequality~\eqref{eq:LMI-general} with $\lambda = 1$. Moreover the matrix in~\eqref{eq:LMI-general} has $A^\top P + PA + 2\eta P$ in its $(1,1)$ block compared to $A^\top P + PA + \eta I_n$ in~\cite{VA-ST:19}. This modification ensures that the inequality~\eqref{eq:contraction-ineq} holds rather than a related inequality with $-\frac{\eta}{2} \|x_1-x_2\|_2^2$ on the right-hand side.
On the other hand, there exist mappings $\varphi$ that are monotone and Lipschitz but not cocoercive. Thus, we consider a smaller class of mappings in Theorem~\ref{prop:Lure1}, but are able to provide necessary and sufficient conditions for absolute contractivity.

\section{Parametric Projection-Based Controllers}
We are interested in studying a continuous-time LTI system being driven by an parametric optimization-based controller. We say that the optimization problem is parametric since it is a function of the state.  Specifically, we look at parametric projection-based controllers. More concretely, for $A \in \real^{n \times n}, B \in \real^{n \times m}, \map{u^\star}{\real^n}{\real^m}, \map{k}{\real^n}{\real^m}, \map{g}{\real^n \times \real^m}{\real^p}$, the LTI system and controller are given by:
\begin{equation}\label{eq:LTI}
	\begin{aligned}
		\dot{x} &= Ax + Bu^\star(x), \\
		u^{\star}(x) := &\argmin_{u \in \real^m} && \hspace{-3.5em}\frac{1}{2}\|u - k(x)\|_2^2 \\
		& \text{s.t.}  &&\hspace{-3.5em} g(x,u) \leq \vectorzeros[p].
	\end{aligned}
\end{equation}
In the context of the parametric optimization problem in~\eqref{eq:LTI}, $k$ denotes a nominal feedback controller and $g$ captures constraints on the controller as a function of the state. 
Such controllers commonly arise in CLF and CBF theory, where the parametric optimization problem in~\eqref{eq:LTI} is used to enforce that $u^\star$ either causes the closed-loop system to decrease a specified Lyapunov function or keep a certain set forward-invariant, respectively~\cite{ADA-XX-JWG-PT:17}. 

The question we aim to answer in this letter is the following: \textbf{What are conditions on the LTI system and the parametric optimization problem to ensure exponential stability of~\eqref{eq:LTI}?} Our main method for establishing sufficient conditions for exponential stability will be via the virtual system method in Section~\ref{sec:virtual-sys}.

\newcommand{\taumax}[1]{\subscr{\tau}{max}(#1)}
\subsection{Well-Posedness and Regularity of Solutions}
In order to study the dynamical system~\eqref{eq:LTI}, we need to ensure that it is well-posed. In other words, we need to ensure that the controller $u^\star$ is suitably regular, i.e., at least continuous. Several works in the literature have studied sufficient conditions for certain regularity of $u^\star$, e.g., continuity, Lipschitzness, or differentiability~\cite{AVF:76,BJM-MJP-ADA:15,PM-AA-JC:23}. In this work, we utilize the following proposition from~\cite{PM-AA-JC:23} which provides a sufficient condition for $u^\star$ to be continuous.
\begin{prop}[{\cite[Proposition~4]{PM-AA-JC:23}}]\label{prop:regularity}
	Consider the map $\map{u^\star}{\real^n}{\real^m}$ defined via the solution to the parametric optimization problem
	\begin{equation}\label{eq:PNLP}
		\begin{aligned}
			u^{\star}(x) := &\argmin_{u \in \real^m} && f(x,u) \\
			 &\text{s.t.}  && g(x,u) \leq \vectorzeros[p].
		\end{aligned}
	\end{equation}
	where $\map{f}{\real^n \times \real^m}{\real}$ and $\map{g}{\real^n \times \real^m}{\real^p}$ are each twice continuously differentiable on $\real^n \times \real^m$. Further assume that for some $x_0 \in \real^n$, $f(x_0, \cdot)$ is strongly convex and $g(x_0, \cdot)$ is convex and that there exists $\hat{u} \in \real^m$ such that $g(x_0,\hat{u}) \ll \vectorzeros[p]$\footnote{For two vectors, $v,w \in \real^n$, $v \ll w$ if $v_i < w_i$ for all $i \in \until{n}$.}. Then there exists a neighborhood of $x_0$ such that $u^\star$ is continuous at every point in the neighborhood.
\end{prop}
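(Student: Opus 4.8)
The plan is to invoke \emph{Berge's Maximum Theorem}, whose hypotheses are (i) continuity of the objective, (ii) continuity (both upper and lower hemicontinuity) of the feasible-set correspondence, and (iii) compactness of the feasible sets; the theorem then yields upper hemicontinuity of the $\argmin$ correspondence, which upgrades to continuity of the single-valued map $u^\star$ once uniqueness of the minimizer is established. Since $f$ is $C^2$ and hence continuous, ingredient (i) is immediate, so the work lies in (ii), (iii), and uniqueness.

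First I would secure existence and uniqueness at $x_0$ together with a compactification. Strong convexity of $f(x_0,\cdot)$, i.e.\ $\nabla_u^2 f(x_0,u)\succeq \mu I_m$ for some $\mu>0$, makes $f(x_0,\cdot)$ coercive, so the closed convex feasible set $\Gamma(x_0):=\setdef{u\in\real^m}{g(x_0,u)\le \vectorzeros[p]}$ (nonempty by the Slater point $\hat u$) admits a unique minimizer $u^\star(x_0)$. Coercivity also bounds the minimizer: a first-order optimality (variational-inequality) argument gives $f(x_0,u)\ge f(x_0,u^\star(x_0))+\tfrac{\mu}{2}\|u-u^\star(x_0)\|_2^2$ for every feasible $u$, so any candidate minimizer, which must satisfy $f(x_0,u)\le f(x_0,\hat u)$, lies in a fixed ball. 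I would then fix a closed ball $\overline B$ strictly containing this region and replace $\real^m$ by $\overline B$ in the program; this does not change the optimizer near $x_0$ but renders the feasible sets $\Gamma(x)\cap\overline B$ compact, delivering (iii). Uniqueness then propagates: by uniform continuity of $\nabla_u^2 f$ on the compact set $\{\|x-x_0\|_2\le\delta\}\times\overline B$, one has $\nabla_u^2 f(x,u)\succeq \tfrac{\mu}{2}I_m$ there for $\delta$ small, so $f(x,\cdot)$ is strongly convex on $\overline B$ and the minimizer stays unique for all $x$ near $x_0$.

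The main obstacle is the lower hemicontinuity of $x\mapsto\Gamma(x)\cap\overline B$, and this is exactly where the Slater condition $g(x_0,\hat u)\ll \vectorzeros[p]$ enters. Upper hemicontinuity is routine: the correspondence has closed graph (by continuity of $g$) and compact range, which on a compact domain gives upper hemicontinuity. For lower hemicontinuity I would use strict feasibility together with convexity of $g(x_0,\cdot)$: given any feasible $u\in\Gamma(x_0)$, the segment $u_\theta:=(1-\theta)u+\theta\hat u$ satisfies $g(x_0,u_\theta)\ll\vectorzeros[p]$ for every $\theta\in(0,1]$, and by continuity of $g$ these strictly feasible points remain feasible for all $x$ in a neighborhood; letting $\theta\to0$ shows every point of $\Gamma(x_0)$ is approachable by points of $\Gamma(x)$, which is lower hemicontinuity at $x_0$. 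By continuity of $g$ the Slater point also persists ($g(x,\hat u)\ll\vectorzeros[p]$ near $x_0$), so the feasible sets are nonempty throughout the neighborhood.

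With (i)--(iii) and uniqueness in hand, Berge's theorem gives that the $\argmin$ correspondence is upper hemicontinuous and nonempty, compact-valued near $x_0$; being single-valued it is therefore continuous at every point of the neighborhood, which is the claim. I note that a sharper route via the implicit function theorem applied to the KKT system is available, but it requires stronger constraint qualifications (e.g.\ LICQ and strict complementarity) and would yield differentiability rather than the mere continuity asserted here; the Berge-theorem argument is preferable precisely because it uses only the stated $C^2$, convexity, and Slater hypotheses.
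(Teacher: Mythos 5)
The paper does not actually prove this proposition --- it is imported verbatim from \cite{PM-AA-JC:23} --- so there is no in-paper proof to compare against line by line; the closest analogue is the appendix lemma on continuity of the parametric projection, which, like your argument, reduces everything to continuity of the feasible-set correspondence $x \rightrightarrows \Gamma(x)$ at points admitting a Slater vector. Your Berge-type route is therefore the right general shape, but it has a genuine gap: essentially every step that must hold at points $x \neq x_0$ silently uses convexity of $g(x,\cdot)$, which is hypothesized only at $x_0$. Concretely: (a) uniqueness of the minimizer at nearby $x$ needs $\Gamma(x)$ to be convex --- a strongly convex objective over a nonconvex compact set can have many minimizers (e.g.\ $\|u\|_2^2$ over a sphere) --- so single-valuedness, and hence the upgrade from upper hemicontinuity of the argmin to continuity of $u^\star$, is not secured away from $x_0$; (b) your lower-hemicontinuity argument via the segment $u_\theta = (1-\theta)u + \theta\hat{u}$ uses convexity of $g$ in $u$ at the base point, so it establishes lower hemicontinuity of $\Gamma$ only at $x_0$, whereas Berge's theorem must be invoked at every point of a neighborhood to obtain the stated conclusion ``continuous at every point in the neighborhood.''

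A second gap is the compactification. The a priori bound $f(x_0,u) \ge f(x_0,u^\star(x_0)) + \tfrac{\mu}{2}\|u-u^\star(x_0)\|_2^2$ confines minimizers to a ball only at $x = x_0$. For nearby $x$ you obtain $\nabla_u^2 f(x,u) \succeq \tfrac{\mu}{2} I_m$ only for $u \in \overline{B}$, which does not make $f(x,\cdot)$ coercive on all of $\Gamma(x)$; the unrestricted argmin defining $u^\star(x)$ may then fail to exist or may lie outside $\overline{B}$, so the claim that restricting to $\overline{B}$ ``does not change the optimizer near $x_0$'' is unjustified as written, and without global convexity of $f(x,\cdot)$ the interior minimizer of the restricted program need not be a global minimizer of the original one. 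Both gaps trace back to the hypotheses being imposed at the single point $x_0$; in the paper's actual application Assumption (A1) supplies convexity of $g(x,\cdot)$ for all $x$ and the objective $\tfrac{1}{2}\|u-k(x)\|_2^2$ is uniformly strongly convex and coercive in $u$, under which your argument does close. At the stated level of generality you would need either to strengthen the hypotheses accordingly or to retreat to the set-valued argmin and prove continuity only where it is single-valued, as is effectively done in \cite{PM-AA-JC:23}.
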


By existence theorems, we know that for the system $\dot{x} = Ax + Bu^\star(x)$, for each initial condition $x_0$ satisfying the assumptions of Proposition~\ref{prop:regularity}, there exists a positive constant $\subscr{\tau}{max}(x_0)$ and a continuously differentiable curve $\map{\phi_{x_0}}{[0, \subscr{\tau}{max}(x_0))}{\real^n}$ satisfying
$\frac{d\phi_{x_0}}{dt}(t) = A\phi_{x_0}(t) + Bu^\star(\phi_{x_0}(t))$ for all $t \in [0, \subscr{\tau}{max}(x_0))$. We say that the solution $\phi_{x_0}$ is forward-complete if $\subscr{\tau}{max}(x_0) = +\infty$.

\subsection{Stability Analysis for LTI Systems}
Consider the dynamical system and its corresponding controller defined via a parametric optimization problem~\eqref{eq:LTI} and define the following sets 
\begin{align}
	\Gamma(x) &:= \setdef{u \in \real^m}{g(x,u) \leq \vectorzeros[p]} \quad \text{ and } \\
	\mcK &:= \setdef{x \in \real^n}{\exists \hat{u} \text{ s.t. } g(x,\hat{u}) \ll \vectorzeros[p]},
\end{align}
where $\Gamma(x)$ represents the feasible control actions at the state $x$ and $\mcK$ denotes the points in state space where the feasible set, $\Gamma(x)$, has an interior.


  We make the following assumptions on our problem:
	\begin{enumerate}[label=\textup{(A\arabic*)},leftmargin=*]
		\item\label{assm:1} \textbf{(Regularity of $g$)} The map $\map{g}{\real^n \times \real^m}{\real^p}$ is twice continuously differentiable on $\real^n \times \real^m$ and $g(x, \cdot)$ is convex for all $x \in \real^n$,
		\item\label{assm:1.5} \textbf{(Existence of equilibrium and feasibility of zero control)} $\vectorzeros[n] \in \mcK$ and $\vectorzeros[m] \in \Gamma(x)$ for all $x \in \mcK$,
		\item\label{assm:2} \textbf{(Linearity of nominal controller)} the map $\map{k}{\real^n}{\real^m}$ is linear, i.e., $k(x) = Kx$ for some $K \in \real^{m \times n}$,
		\item\label{assm:3} \textbf{(Dynamical feasibility)} for every $x_0 \in \mcK$, $\phi_{x_0}(t) \in \mcK$ for all $t \in [0,\taumax{x_0})$.
	\end{enumerate}

We make comments about some of these assumptions. 
Assumption~\ref{assm:1.5} ensures that $\vectorzeros[n]$ is an equilibrium point and that $u = \vectorzeros[m]$ is a feasible control action. Assumption~\ref{assm:3} ensures that the controller $u^\star$ does not drive the system outside the set of points where the feasible set of~\eqref{eq:LTI} has an interior. Outside of this set, the controller may fail to be continuous and solutions of~\eqref{eq:LTI} may fail to exist. One simple way to verify Assumption~\ref{assm:3} is to ensure that $\mcK = \real^n$. Note further that $u^\star(x)$ can compactly be written $u^\star(x) = \proj_{\Gamma(x)}(Kx)$, where given a nonempty, closed, convex set $\Omega \subseteq \real^m$, $\proj_{\Omega}(z) := \argmin_{v \in \Omega} \|z - v\|_2$. 


We are now ready to state our first main theorem establishing the exponential stability of the system~\eqref{eq:LTI}.

\begin{theorem}[Exponential stability for LTI systems with parametric
    projection-based controllers]
  \label{thm:general-LTI}
  Consider the dynamics~\eqref{eq:LTI} satisfying
  Assumptions~\ref{assm:1}-\ref{assm:3}. Further suppose that there exist
  $P = P^\top \succ 0$, $\eta > 0$, and $\lambda \geq 0$ satisfying the inequality
  \begin{equation}\label{eq:LMI}
    \begin{bmatrix}
      A^\top P + PA + 2\eta P & PB + \lambda K^\top \\ B^\top P + \lambda K & -2\lambda I_m
    \end{bmatrix} \preceq 0.
  \end{equation}
  Then from any $x_0 \in \mcK$,
  \begin{enumerate}
  	\item\label{item:LTI-1} solutions to~\eqref{eq:LTI}, $\phi_{x_0}$, are forward-complete,
  	\item\label{item:LTI-2} the origin is globally exponentially stable with bound
  	\begin{equation}\label{eq:exp-stability-LTI}
  		\|\phi_{x_0}(t)\|_P \leq \e^{-\eta t}\|x_0\|_P,
  	\end{equation}
  	\item\label{item:LTI-3} the mapping $\map{V}{\mcK}{\real_{\geq 0}}$
          given by $V(x) = x^\top Px$ is a global Lyapunov function for the
          dynamics~\eqref{eq:LTI}.
  \end{enumerate} 
\end{theorem}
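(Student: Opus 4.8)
The plan is to prove all three claims at once via the virtual system method of Section~\ref{sec:virtual-sys}, using Theorem~\ref{prop:Lure1} as the engine. Fix $x_0 \in \mcK$ and, along the nominal solution $\phi_{x_0}$, I would design the virtual system
\begin{equation*}
  \dot{y} = Ay + B\proj_{\Gamma(\phi_{x_0}(t))}(Ky),
\end{equation*}
which is well defined on $[0,\taumax{x_0})$ because Assumption~\ref{assm:3} keeps $\phi_{x_0}(t) \in \mcK$, so that $\Gamma(\phi_{x_0}(t))$ is nonempty (it contains $\vectorzeros[m]$ by Assumption~\ref{assm:1.5}), closed, and convex (by Assumption~\ref{assm:1}). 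The key structural observation is that this is precisely a time-varying Lur'e system~\eqref{eq:Lure} with nonlinearity $\varphi(z,t) = \proj_{\Gamma(\phi_{x_0}(t))}(z)$. Since the metric projection onto a nonempty closed convex set is firmly nonexpansive, $\varphi$ satisfies the cocoercivity bound~\eqref{eq:monotonicity} with $\rho = 1$, and the LMI~\eqref{eq:LMI} is exactly~\eqref{eq:LMI-general} evaluated at $\rho = 1$. Hence Theorem~\ref{prop:Lure1} applies and certifies that the virtual system is strongly infinitesimally contracting with respect to $\|\cdot\|_P$ with rate $\eta$.

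Next I would carry out the \emph{select} and \emph{infer} steps. Using $u^\star(x) = \proj_{\Gamma(x)}(Kx)$ (Assumption~\ref{assm:2}), the nominal trajectory $y(t) = \phi_{x_0}(t)$ solves the virtual system, since on the diagonal $z = y = \phi_{x_0}(t)$ the virtual field reduces to the closed-loop field $Ay + Bu^\star(y)$. Moreover, because $\vectorzeros[m] \in \Gamma(\phi_{x_0}(t))$ we have $\proj_{\Gamma(\phi_{x_0}(t))}(\vectorzeros[m]) = \vectorzeros[m]$, so $y(t) \equiv \vectorzeros[n]$ is an equilibrium of the virtual system. Applying the incremental stability estimate of the virtual system method to these two solutions yields, for all $t \in [0,\taumax{x_0})$,
\begin{equation*}
  \|\phi_{x_0}(t)\|_P = \|\phi_{x_0}(t) - \vectorzeros[n]\|_P \leq \e^{-\eta t}\|x_0 - \vectorzeros[n]\|_P = \e^{-\eta t}\|x_0\|_P,
\end{equation*}
which is the bound~\eqref{eq:exp-stability-LTI} of claim~\ref{item:LTI-2}, valid so far on the maximal interval of existence.

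To upgrade this to a global statement I would establish forward-completeness, claim~\ref{item:LTI-1}. Here I would exploit that $\vectorzeros[m] \in \Gamma(x)$ together with nonexpansiveness of the projection gives the linear growth bound $\|u^\star(x)\|_2 = \|\proj_{\Gamma(x)}(Kx) - \proj_{\Gamma(x)}(\vectorzeros[m])\|_2 \leq \|Kx\|_2$, so the closed-loop vector field obeys $\|Ax + Bu^\star(x)\|_2 \leq (\|A\|_2 + \|K\|_2\|B\|_2)\|x\|_2$. Since the exponential estimate confines $\phi_{x_0}$ to the compact $P$-ball $\{x : \|x\|_P \leq \|x_0\|_P\}$, the velocity stays bounded, which precludes finite-time escape; combined with Assumption~\ref{assm:3} (the trajectory remains in $\mcK$, where $u^\star$ is continuous by Proposition~\ref{prop:regularity}) this gives $\taumax{x_0} = +\infty$. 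Finally, for claim~\ref{item:LTI-3} I would note that $V(x) = x^\top Px = \|x\|_P^2$ is continuous, positive definite, and radially unbounded, and that applying the contraction inequality of Theorem~\ref{prop:Lure1} pointwise with the two virtual states $\phi_{x_0}(t)$ and $\vectorzeros[n]$ gives $\phi_{x_0}(t)^\top P\dot\phi_{x_0}(t) \leq -\eta\|\phi_{x_0}(t)\|_P^2$, i.e.\ $\dot V \leq -2\eta V$ along solutions, so $V$ is a global Lyapunov function (this also re-derives~\eqref{eq:exp-stability-LTI} by Grönwall).

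The main obstacle I anticipate is the forward-completeness argument rather than the contractivity reduction, which is essentially bookkeeping once firm nonexpansiveness is invoked. The delicate point is that $\mcK$ need not equal $\real^n$, so a priori a bounded trajectory could approach $\partial\mcK$ in finite time and lose well-posedness; the boundedness from~\eqref{eq:exp-stability-LTI} rules out blow-up, and Assumption~\ref{assm:3} is exactly what must be leveraged to ensure the trajectory does not exit the feasible region where $u^\star$ is continuous. A secondary technical point worth checking is the well-posedness of the time-varying virtual ODE, namely the (pointwise-in-time) continuity of $t \mapsto \proj_{\Gamma(\phi_{x_0}(t))}(z)$ and the fact that, although the projection is only Lipschitz and not $C^1$, Theorem~\ref{prop:Lure1} applies directly since it is phrased through the one-sided inequality~\eqref{eq:monotonicity} rather than through a Jacobian.
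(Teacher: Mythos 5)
Your proposal is correct and follows essentially the same route as the paper: the same virtual system $\dot{y} = Ay + B\proj_{\Gamma(\phi_{x_0}(t))}(Ky)$, the same invocation of cocoercivity of the projection with $\rho = 1$ to apply Theorem~\ref{prop:Lure1}, the same choice of $\phi_{x_0}$ and the zero trajectory as the two virtual solutions, and the same compactness argument for forward-completeness, with statement~\ref{item:LTI-3} derived from the decay estimate. The technical caveats you flag (continuity of $t \mapsto \proj_{\Gamma(\phi_{x_0}(t))}(z)$ and the fact that Theorem~\ref{prop:Lure1} needs only the one-sided inequality rather than differentiability) are exactly the points the paper defers to its appendix.
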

\begin{proof}
	We apply the virtual system method. Let $x_0 \in \mcK$ be arbitrary and consider the virtual system 
	\begin{equation}\label{eq:virtual-sys}
		\dot{y} = Ay + B\proj_{\Gamma(\phi_{x_0}(t))}(Ky).
	\end{equation}
	Note that for all $t \in [0,\taumax{x_0})$, $\proj_{\Gamma(\phi_{x_0}(t))}$ obeys the inequality~\eqref{eq:monotonicity} with $\rho = 1$ due to cocoercivity of projections, see, e.g.,~\cite[Eq. (2)]{EKR-SB:16}. Therefore the virtual system is a Lur'e system of the form~\eqref{eq:Lure}. Theorem~\ref{prop:Lure1} implies\begin{arxiv}
	\footnote{To apply Theorem~\ref{prop:Lure1}, we need to establish continuity in $t$ for the virtual system~\eqref{eq:virtual-sys}. 	We prove this property in Lemma~\ref{lem:virtual-sys-regularity}.}
	\end{arxiv}	
	that this virtual system is contracting with respect to $\|\cdot\|_P$ with rate $\eta > 0$. In other words, any two trajectories $y_1(\cdot), y_2(\cdot)$ for the virtual system satisfy for all $t \in [0,\taumax{x_0})$,
	\begin{equation}
		\|y_1(t) - y_2(t)\|_P \leq \e^{-\eta t}\|y_1(0) - y_2(0)\|_P.
	\end{equation}
	First note that $\phi_{x_0}$ is a valid trajectory for the virtual system so we set $y_1(t) = \phi_{x_0}(t)$. Additionally note that $y_2(t) = \vectorzeros[n]$ is a valid trajectory for the virtual system since $\vectorzeros[m] \in \Gamma(x)$ for all $x \in \mcK$. Substituting these two trajectories implies~\eqref{eq:exp-stability-LTI} for $t \in [0,\taumax{x_0})$. We now establish that $\taumax{x_0} = +\infty$. To this end, note that the bound~\eqref{eq:exp-stability-LTI} implies that the trajectory $\phi_{x_0}$ remains in the compact set $\setdef{x \in \real^n}{\|x\|_P \leq \|x_0\|_P}$ for $t \in [0,\taumax{x_0})$ for which it is defined. Since this set is compact, the trajectory cannot escape in a finite amount of time, meaning that the trajectory is forward complete. This reasoning proves statements~\ref{item:LTI-1} and~\ref{item:LTI-2}. Statement~\ref{item:LTI-3} is a consequence of~\ref{item:LTI-2}.
	\begin{arxiv}
	To prove statement~\ref{item:LTI-3} note that $V(x) = \|x\|_P^2$, let $x_0 \in \mcK$ and for $h > 0$ note that the inequality~\eqref{eq:exp-stability-LTI} implies
	\begin{equation*}
		\|\phi_{x_0}(h)\|_P^2 \leq \e^{-2\eta h}\|x_0\|_P^2.
	\end{equation*} 
	Now subtract $\|x_0\|_P^2$ from both sides, divide by $h > 0$ and take the limit as $h \to 0^+$ to conclude
	\begin{equation*}
		\lim_{h \to 0^+} \frac{\|\phi_{x_0}(h)\|_P^2 - \|x_0\|_P^2}{h} \leq \|x_0\|_{P}^2 \lim_{h \to 0^+} \frac{\e^{-2\eta h} - 1}{h}.
	\end{equation*}
	We see that the left-hand side of the above inequality is the definition of the Lie derivative of $V$ along trajectories of~\eqref{eq:LTI} with $x_0 \in \mcK$. Moreover, the right-hand side evaluates to $-2\eta\|x_0\|_P^2$. In other words, $\dot{V}(x_0) \leq -2\eta V(x_0)$ for all $x_0 \in \mcK$, which implies statement~\ref{item:LTI-3}.
	\end{arxiv}
\end{proof}


The key insight in Theorem~\ref{thm:general-LTI} is that LTI systems with parametric projection controllers are a type of state-dependent Lur'e system $\dot{x} = Ax + B\proj_{\Gamma(x)}(Kx)$ and that the virtual system method transforms them into standard time-varying ones, where Theorem~\ref{prop:Lure1} applies. We remark that one could also use classical results such as the circle criterion to establish stability of~\eqref{eq:LTI} but we elect to use the virtual system method to highlight its utility.

In the case that Assumptions~\ref{assm:1}-\ref{assm:3}, $K = 0$, and the inequality~\eqref{eq:LMI} holds, the dynamics simply become $\dot{x} = Ax$ since $\vectorzeros[n] \in \Gamma(x)$ for all $x \in \mcK$. In other words, $K = 0$ always ensures global exponential stability under the stated assumptions since~\eqref{eq:LMI} implies that $A$ is Hurwitz. Theorem~\ref{thm:general-LTI} provides a novel sufficient condition for the system~\eqref{eq:LTI} to be exponential stable with $K \neq 0$ which has been designed to make $A + BK$ stable, e.g., using LQR. 

\subsection{Stability Analysis for Single-Integrators}

A special class of LTI systems for which we can ensure different results is the single integrator $\dot{x} = u^\star(x)$, namely the system~\eqref{eq:LTI} with $A = 0$ and $B = I_n$. For this class of systems, the inequality~\eqref{eq:LMI} will never hold with $\eta > 0$ since $A$ is not Hurwitz. 
\begin{theorem}[Exponential stability for single integrators with parametric
    projection-based controllers]\label{thm:single-integrator} Consider the
  dynamics~\eqref{eq:LTI} with $A = 0$ and $B = I_n$ and suppose
  Assumptions~\ref{assm:1}-\ref{assm:3} hold. Further suppose $K = K^\top
  \preceq -\eta I_n, \eta > 0$. Then from any $x_0 \in \mcK$,
  \begin{enumerate}
  	\item\label{item:single-1} solutions to~\eqref{eq:LTI}, $\phi_{x_0}$, are forward-complete and
  	\item\label{item:single-2} solutions asymptotically converge to the set of equilibria, $\subscr{\mcX}{eq} := \setdef{x \in \real^n}{\proj_{\Gamma(x)}(Kx) = \vectorzeros[n]}$.
  	\setcounter{saveenum}{\value{enumi}}
  \end{enumerate}
   Moreover, under the additional assumption that $\vectorzeros[n] \in \mathrm{int}(\Gamma(\vectorzeros[n]))$, the following statement holds:
   \begin{enumerate}\setcounter{enumi}{\value{saveenum}}
   	\item\label{item:single-3} if $\phi_{x_0}(t) \to \vectorzeros[n]$ as $t \to \infty$, then there exists $M(x_0) > 0$ such that
   	\begin{equation}\label{eq:semi-global}
   		\|\phi_{x_0}(t)\|_2 \leq M(x_0) \e^{-\eta t}\|x_0\|_2.
   	\end{equation}
   \end{enumerate}
\end{theorem}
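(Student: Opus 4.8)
The plan is to build everything around the Lyapunov function $V(x) = -\tfrac{1}{2}x^\top K x$. Since $K = K^\top \preceq -\eta I_n$, the matrix $-K$ is positive definite with $-K \succeq \eta I_n$, so $V$ satisfies $\tfrac{\eta}{2}\|x\|_2^2 \leq V(x) \leq \tfrac{1}{2}\lambda_{\max}(-K)\|x\|_2^2$; in particular $V$ is positive definite and radially unbounded. Writing $z = Kx$ and $w = u^\star(x) = \proj_{\Gamma(x)}(z)$, the first step is to compute $\dot V(x) = -x^\top K \dot x = -z^\top w$ along trajectories and to bound it via the variational characterization of the projection. Because $\vectorzeros[m] \in \Gamma(x)$ for all $x \in \mcK$ by Assumption~\ref{assm:1.5}, taking the feasible point $\vectorzeros[m]$ in the obtuse-angle inequality $(v-w)^\top(z-w) \leq 0$ gives $z^\top w \geq \|w\|_2^2$, whence
\[ \dot V(x) \leq -\|u^\star(x)\|_2^2 \leq 0. \]
Statement~\ref{item:single-1} then follows at once: $V$ is non-increasing, so $\phi_{x_0}$ stays in the compact sublevel set $\{x : V(x) \leq V(x_0)\}$, and by Assumption~\ref{assm:3} it stays in $\mcK$; a bounded trajectory in the domain of a continuous vector field cannot escape in finite time, so $\taumax{x_0} = +\infty$.

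For statement~\ref{item:single-2}, I would integrate the differential inequality to get $\int_0^\infty \|u^\star(\phi_{x_0}(s))\|_2^2\,ds \leq V(x_0) < \infty$. Since the trajectory is bounded and $u^\star$ is continuous by Proposition~\ref{prop:regularity}, the velocity $\dot\phi_{x_0} = u^\star(\phi_{x_0})$ is bounded, so $t \mapsto \|u^\star(\phi_{x_0}(t))\|_2^2$ is uniformly continuous; Barbalat's lemma then yields $u^\star(\phi_{x_0}(t)) \to \vectorzeros[n]$, which, by continuity of $u^\star$ and boundedness of the trajectory, is equivalent to $\mathrm{dist}(\phi_{x_0}(t), \subscr{\mcX}{eq}) \to 0$. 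Equivalently, one can invoke LaSalle's invariance principle, observing that $\{\dot V = 0\} = \subscr{\mcX}{eq}$ and that every point of $\subscr{\mcX}{eq}$ is an equilibrium, so $\subscr{\mcX}{eq}$ is itself the largest invariant set contained in $\{\dot V = 0\}$.

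For statement~\ref{item:single-3}, the added hypothesis $\vectorzeros[n] \in \mathrm{int}(\Gamma(\vectorzeros[n]))$ together with continuity of $g$ yields a neighborhood $U$ of the origin on which $Kx$ is strictly feasible, so that $\proj_{\Gamma(x)}(Kx) = Kx$ and the closed loop reduces to the linear system $\dot x = Kx$ on $U$. If $\phi_{x_0}(t) \to \vectorzeros[n]$, there is a time $T = T(x_0)$ with $\phi_{x_0}(t) \in U$ for all $t \geq T$; on that interval $\tfrac{d}{dt}\|\phi_{x_0}\|_2^2 = 2\phi_{x_0}^\top K \phi_{x_0} \leq -2\eta\|\phi_{x_0}\|_2^2$, giving $\|\phi_{x_0}(t)\|_2 \leq \e^{-\eta(t-T)}\|\phi_{x_0}(T)\|_2$. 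Combining this with the sublevel-set bound $\|\phi_{x_0}(t)\|_2^2 \leq \tfrac{2}{\eta}V(\phi_{x_0}(t)) \leq \tfrac{2}{\eta}V(x_0) \leq \tfrac{\lambda_{\max}(-K)}{\eta}\|x_0\|_2^2$, which holds for all $t$ and controls both the segment $[0,T]$ and the value $\|\phi_{x_0}(T)\|_2$, produces~\eqref{eq:semi-global} with $M(x_0) = \e^{\eta T(x_0)}\sqrt{\lambda_{\max}(-K)/\eta}$; note that $M$ depends on $x_0$ precisely through the entry time $T(x_0)$.

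The step I expect to be the main obstacle is the rigor of the convergence claim in~\ref{item:single-2}: since $u^\star$ is only guaranteed continuous (not Lipschitz) and the trajectory could a priori accumulate on $\partial\mcK$, one must argue that the relevant portion of state space is a compact subset of $\mcK$ on which $u^\star$ is uniformly continuous before invoking Barbalat (or that solutions are unique before invoking LaSalle). Establishing that the active constraint is genuinely inactive on a full neighborhood of the origin in~\ref{item:single-3} is a secondary technical point that again relies on the strict interior condition $\vectorzeros[n] \in \mathrm{int}(\Gamma(\vectorzeros[n]))$ combined with continuity of $g$.
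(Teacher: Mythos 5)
Your proposal is correct and follows essentially the same route as the paper: the same Lyapunov function $V(x)=-\tfrac12 x^\top Kx$, the same key bound $\dot V\le -\|u^\star(x)\|_2^2$ (you derive it from the obtuse-angle characterization of the projection at the feasible point $\vectorzeros[m]$, which is equivalent to the cocoercivity inequality the paper cites), LaSalle (or Barbalat) for convergence to $\subscr{\mcX}{eq}$, and the same local-linearity argument near the origin for the semi-global exponential bound. Your explicit construction of $M(x_0)$ via the entry time $T(x_0)$ only fills in a detail the paper leaves implicit.
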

\begin{proof}
	Consider as a Lyapunov function candidate $V(x) = -\frac{1}{2}x^\top Kx$. The Lie derivative of $V$ along trajectories of the dynamical system~\eqref{eq:LTI} is
	\begin{align*}
		\dot{V}(x) &= -x^\top K \proj_{\Gamma(x)}(Kx) \\
		&= -(Kx - \vectorzeros[n])^\top(\proj_{\Gamma(x)}(Kx) - \proj_{\Gamma(x)}(\vectorzeros))\\
		&\leq -\|\proj_{\Gamma(x)}(Kx) - \proj_{\Gamma(x)}(\vectorzeros[n])\|_2^2 \leq 0,
	\end{align*}
	where we have used that $\vectorzeros[n] \in \Gamma(x)$ for all $x$ and cocoercivity of $\proj_{\Gamma(x)}$, see, e.g.,~\cite[Eq.~(2)]{EKR-SB:16}. Since $\dot{V}(x) \leq 0$, we preclude finite escape time and thus conclude statement~\ref{item:single-1}. To establish asymptotic convergence to $\subscr{\mcX}{eq}$, we invoke LaSalle's invariance principle and see that trajectories converge to the largest forward-invariant set in $\setdef{x \in \real^n}{\dot{V}(x) = 0}$. However, since $\dot{V}(x) \leq -\|\proj_{\Gamma(x)}(Kx)\|_2^2$, $\dot{V}(x) = 0$ if and only if $\proj_{\Gamma(x)}(Kx) = \vectorzeros[n]$, i.e., $x \in \subscr{\mcX}{eq}$. This argument establishes statement~\ref{item:single-2}.
	To establish statement~\ref{item:single-3}, note that $\vectorzeros[n] \in \mathrm{int}(\Gamma(\vectorzeros[n]))$ implies, by continuity of $g$, that there exist an open neighborhood, $\mcO_x$ containing the origin such that $g(x,Kx) \ll \vectorzeros[p]$ for all $x \in \mcO_x$. In other words, inside this neighborhood, $u^\star(x) = Kx$. Thus, the dynamics~\eqref{eq:LTI} are locally exponentially stable inside this neighborhood.
	Since the trajectory is asymptotically converging to the origin and locally exponentially stable inside $\mcO_x,$ we conclude~\eqref{eq:semi-global}.
\end{proof}

To prove Theorem~\ref{thm:single-integrator}, one could alternatively use
the virtual system method to establish that $\|\phi_{x_0}(t)\|_{-K} \leq
\|x_0\|_{-K}$ and then invoke LaSalle's invariance principle. We opt to
give a more direct Lyapunov proof for simplicity.

\section{Applications}
\subsection{State-Dependent Saturation Control}
\begin{figure}[!t]
	\centering
	\includegraphics[width=0.95\linewidth]{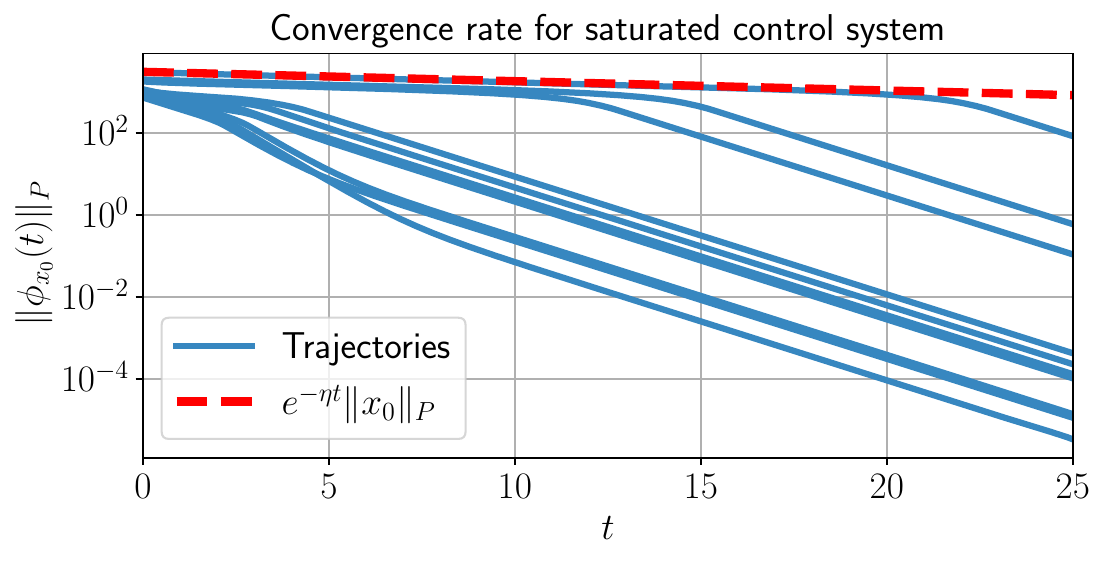} 
	\caption{The evolution of $\|\phi_{x_0}(t)\|_P$ for $10$ different $x_0$ and $P$ is chosen to maximize $\eta$ in~\eqref{eq:LMI}. We also plot $\e^{-\eta t}\|x_0\|_P$, where $\|x_0\|_P$ denotes the largest value of $\|x_0\|_P$ over all randomly generated initial conditions.}\label{fig:saturated}
\end{figure}
\begin{figure*}[!t]
	\centering
	\begin{tabular}{ccc}
		\includegraphics[height=5.57cm]{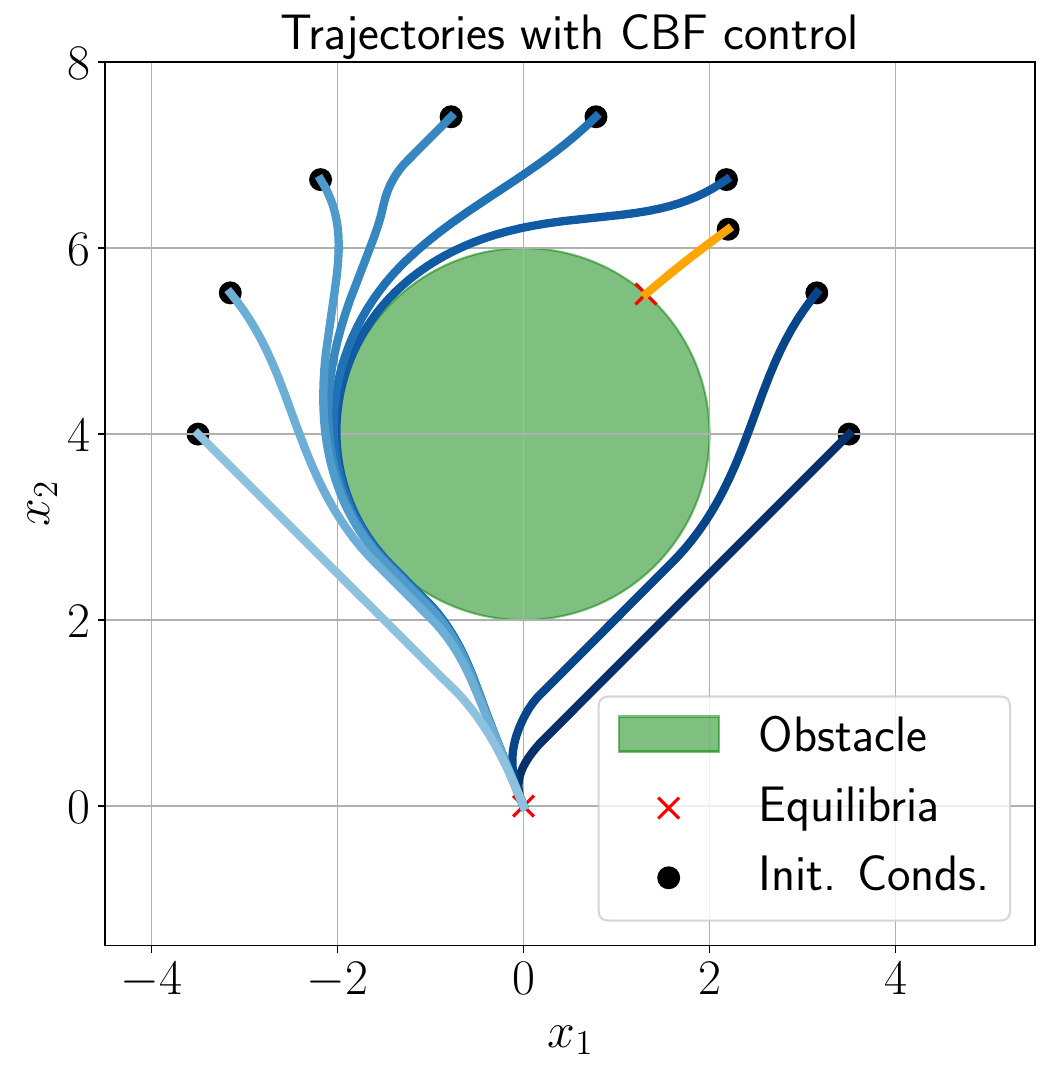} &
		\includegraphics[height=5.57cm]{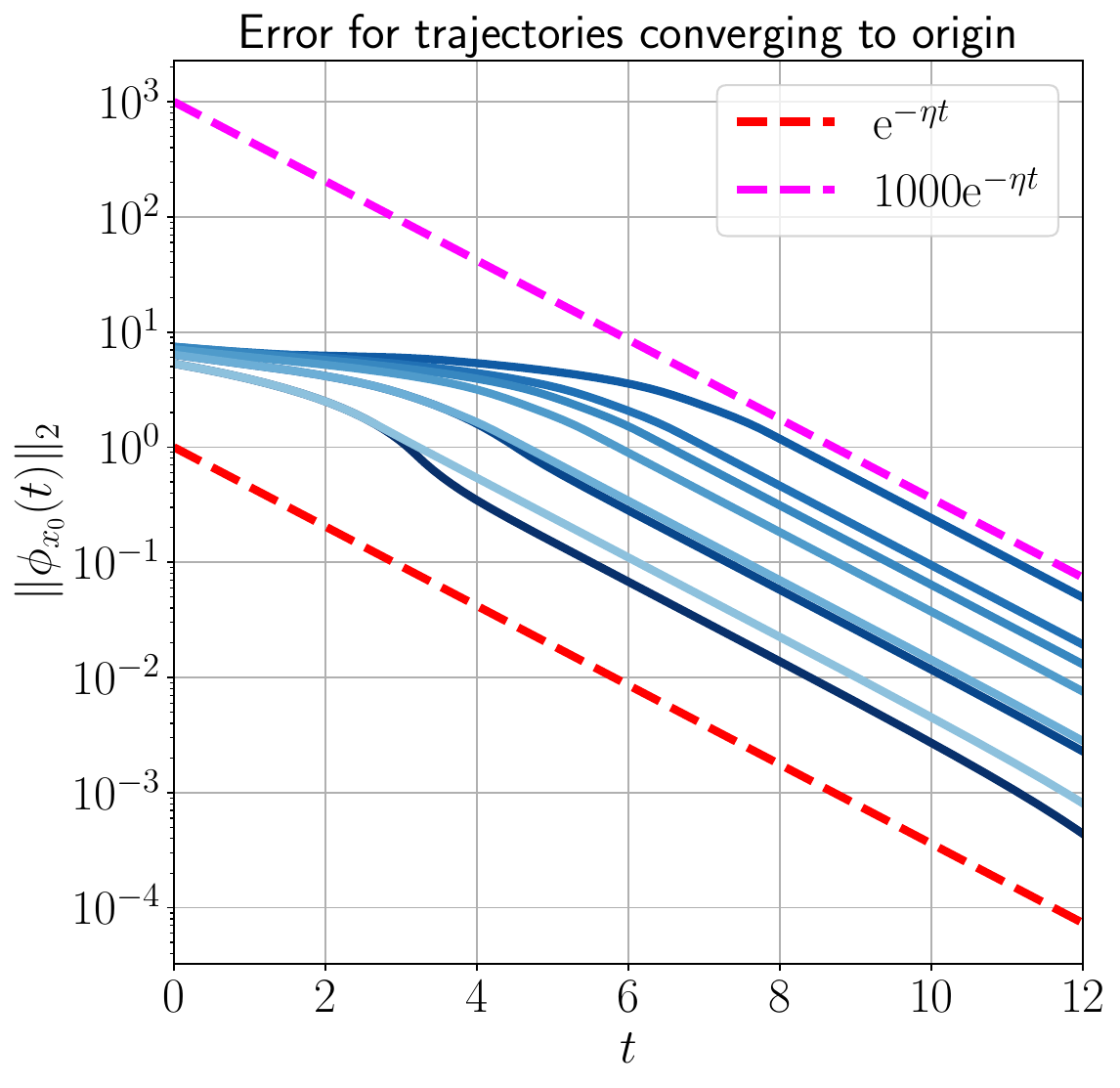} &
		\includegraphics[height=5.57cm]{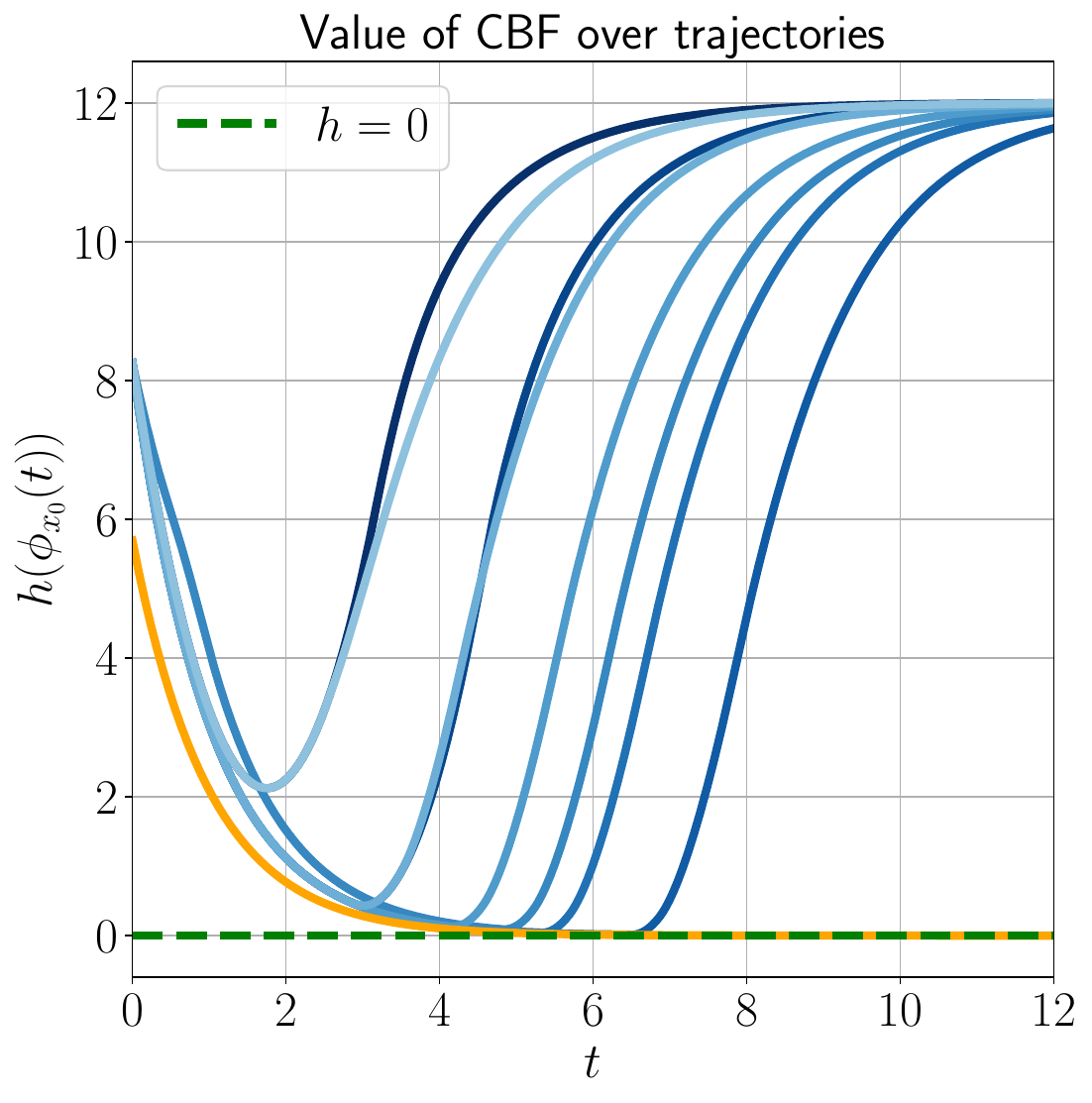}
	\end{tabular}
	\caption{The left figure shows plots of trajectories of~\eqref{eq:CBF} from various initial conditions. We can see that most trajectories, indicated by shades of blue, converge to the origin, while one converges to a point on the boundary of the safe set, shown in orange. The center figure plots the convergence rate of trajectories that converge to the origin. It also plots $\e^{-\eta t}$ and $1000e^{-\eta t}$ and demonstrates that the exponential convergence rate in~\eqref{eq:single-int-exp-conv} cannot be improved in this instance and that $1000 > M(x_0)\|x_0\|_2$ for these initial conditions. The right figure plots the evolution of the barrier function $h$ along trajectories and demonstrates how $h(\phi_{x_0}(t))$ remains positive for all $t \geq 0$. Best seen in color. }\label{fig:cbf-trajectories}
\end{figure*}
For $v \in \realpositive^n$, define the saturation function $\map{\sat_v}{\real^n}{\real^n}$ by
$\sat_v(x) := \max\{-v,\min\{v,x\}\},$
where the $\max$ and $\min$ are applied entrywise. An alternative characterization of the saturation function is via the minimization problem 
\begin{equation}
	\sat_v(x) = \argmin_{u \in \real^n} \setdef{\|u - x\|_2^2}{{-}v \leq u \leq v}.
\end{equation} 
We consider the state-dependent saturated control system
\begin{equation}\label{eq:state-dependent-sat}
	\dot{x} = Ax + B\sat_{v(x)}(Kx),
\end{equation}
where $\map{v}{\real^n}{\realpositive^n}$ is a twice continuously differentiable map dictating actuation constraints as a function of the state. 

When $v$ is constant, one can use results from saturated control systems to assess the stability of~\eqref{eq:state-dependent-sat}. As $v$ is state-dependent, one cannot apply these techniques here. It is straightforward to see that the system~\eqref{eq:state-dependent-sat} is of the form~\eqref{eq:LTI} with $g(x,u) = (u - v(x), -u-v(x))$. Moreover, it is routine to establish that Assumptions~\ref{assm:1}-\ref{assm:3} hold. Therefore, Theorem~\ref{thm:general-LTI} may be applied to provide a sufficient condition for the global exponential stability of the system~\eqref{eq:state-dependent-sat}.
\begin{example}
	We consider the system~\eqref{eq:state-dependent-sat}, where $n = 3, m = 2$, $A =-I_3 + N$, $B = \left[\begin{smallmatrix} 1 & 0 & 0 \\ 0 & 1 & 0 \end{smallmatrix}\right]^\top$, and $N \in \real^{3 \times 3}$ is a random matrix with entries drawn from the standard normal distribution. We assume that $K \in \real^{2 \times 3}$ is preselected so that $u = Kx$ minimizes the LQR objective $\int_0^\infty (x(t)^\top x(t) + u(t)^\top u(t)) dt$ for the LTI system $\dot{x} = Ax + Bu$. We also let $v(x) = \e^{-\|x\|_2^2/2}\vectorones[m]$, where $\vectorones[m] \in \real^m$ is the all-ones vector. We find $\eta > 0, \lambda \geq 0, P \in \real^{3 \times 3}$ satisfying~\eqref{eq:LMI} such that $\eta$ is maximized and plot values of $\|\phi_{x_0}(t)\|_{P}$ for $10$ different samples of $x_0$ from the multivariate normal distribution $\mathcal{N}(\vectorzeros[3],4 I_3)$ in Figure~\ref{fig:saturated}.
	
	We can see that all trajectories converge exponentially quickly to the origin and that the estimated exponential convergence rate from Theorem~\ref{thm:general-LTI} is $\eta = 0.0525$. Empirically, we can see that when trajectories are far from the origin, this rate is tight since $v(x) \approx \vectorzeros[m]$ for $x$ far from $\vectorzeros[3]$ and that the rate is very loose when trajectories are close to the origin since $v(x) \approx \vectorones[m]$ for $x \approx \vectorzeros[3]$.
\end{example}
\subsection{Stability with Control Barrier Functions}
Consider the nonlinear control-affine system
\begin{equation}\label{eq:control-affine}
\dot{x} = F(x) + G(x)u,
\end{equation}
where $\map{F}{\real^n}{\real^n}, \map{G}{\real^n}{\real^{n \times m}}$ are locally Lipschitz. 

Let $\mcC \subseteq \real^n$ and $\map{h}{\real^n}{\real}$ be a sufficiently smooth function such that $\mcC = \setdef{x \in \real^n}{h(x) \geq 0}$. The set $\mcC$ is referred to as the ``safe set". 
\begin{definition}[{Control Barrier Function~\cite[Definition~3]{ADA-XX-JWG-PT:17}}]
	The function $h$ is a control barrier function (CBF) for $\mcC$ if there exists a locally Lipschitz and strictly increasing function $\map{\alpha}{\real}{\real}$ with $\alpha(0) = 0$ such that for all $x \in \mcC$, there exists $u \in \real^m$ with
	\begin{equation}\label{eq:CBF-condition}
		\nabla h(x)^\top F(x) + \nabla h(x)^\top G(x) u + \alpha(h(x)) \geq 0.
	\end{equation}
\end{definition}

A continuous controller $\map{u}{\real^n}{\real^m}$ which satisfies~\eqref{eq:CBF-condition} for all $x \in \mcC$ renders $\mcC$ forward-invariant under the dynamics~\eqref{eq:control-affine}~\cite[Theorem~4]{RK-ADA-SC:21}.

A common way to synthesize controllers that render $\mcC$ forward invariant is via a parametric QP~\cite{ADA-XX-JWG-PT:17}. To this end, we consider a single-integrator being driven by the CBF constraint~\eqref{eq:CBF-condition} and actuator constraints:

\begin{equation}\label{eq:CBF}
	\begin{aligned}
		\dot{x} &= u^\star(x), \\
		u^{\star}(x) := &\argmin_{u \in \real^m} &&\frac{1}{2}\|u - Kx\|_2^2 \\
		& \text{s.t.} &&{-}\nabla h(x)^\top u \leq \alpha(h(x)),\\
		&  &&{-}\bar{u}\vectorones[n] \leq u \leq \bar{u}\vectorones[n],
	\end{aligned}
\end{equation}
where $\bar{u}> 0$.
To study convergence of~\eqref{eq:CBF}, we can check for conditions under which the hypotheses of Theorem~\ref{thm:single-integrator} hold. 

\begin{corollary}[Exponential stability for single integrators with CBF-based controllers]
  Consider the dynamics~\eqref{eq:CBF} and suppose that (i) $h$ is a CBF
  for $\mcC$, (ii) $K = K^\top \preceq -\eta I_n$, (iii) $\vectorzeros[n]
  \in \mathrm{int}(\mcC)$, and (iv) $h$ is thrice continuously
  differentiable and $\alpha$ is twice continuously differentiable. Then
  from any $x_0 \in \mathrm{int}(\mcC)$,
  \begin{enumerate}
  \item solutions to~\eqref{eq:CBF}, $\phi_{x_0}$, are forward-complete, 
  \item solutions remain in $\mcC$ for all $t \geq 0$,
  \item solutions converge to the set of equilibria
		and
		\item if $\phi_{x_0}(t) \to \vectorzeros[n]$ as $t \to \infty$, then there exists $M(x_0) > 0$ such that
		\begin{equation}\label{eq:single-int-exp-conv}
			\|\phi_{x_0}(t)\|_2 \leq M(x_0)\e^{-\eta t}\|x_0\|_2.
		\end{equation}
	\end{enumerate}
\end{corollary}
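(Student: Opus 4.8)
The plan is to recognize the CBF-based QP~\eqref{eq:CBF} as a single-integrator instance of~\eqref{eq:LTI} with $A = 0$ and $B = I_n$ (so $m = n$), nominal controller $k(x) = Kx$, and constraint map $g(x,u) = \big({-}\nabla h(x)^\top u - \alpha(h(x)),\; u - \bar u \vectorones[n],\; -u - \bar u \vectorones[n]\big) \in \real^{2n+1}$, and then to verify Assumptions~\ref{assm:1}--\ref{assm:3} so that Theorem~\ref{thm:single-integrator} applies. First I would check Assumption~\ref{assm:1}: each block of $g(x,\cdot)$ is affine in $u$, hence $g(x,\cdot)$ is convex, while hypothesis (iv) ($h \in C^3$, $\alpha \in C^2$) makes $\nabla h$ and $\alpha \circ h$ twice continuously differentiable, so $g$ is twice continuously differentiable in $(x,u)$. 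Assumption~\ref{assm:2} is immediate since $k(x) = Kx$ is linear.

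Next I would dispatch Assumption~\ref{assm:1.5} by evaluating $g(x,\vectorzeros[n])$: the box rows give $-\bar u \vectorones[n] \le \vectorzeros[n]$ (true since $\bar u > 0$), and the CBF row gives $-\alpha(h(x)) \le 0$, which holds whenever $h(x) \ge 0$ because $\alpha$ is strictly increasing with $\alpha(0) = 0$; thus $\vectorzeros[n] \in \Gamma(x)$ for all $x \in \mcC$. To place $\vectorzeros[n] \in \mcK$ I would read hypothesis (iii) as $h(\vectorzeros[n]) > 0$ (the origin lies strictly inside the safe set), so that $\hat u = \vectorzeros[n]$ satisfies every constraint strictly and strict feasibility holds at the origin. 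The same computation shows $\vectorzeros[n] \in \mathrm{int}(\Gamma(\vectorzeros[n]))$, which is precisely the additional hypothesis required for statement~(iii) of Theorem~\ref{thm:single-integrator}; by continuity of $g$ it also implies $u^\star(x) = Kx$ in a neighborhood of the origin, where~\eqref{eq:CBF} reduces to $\dot x = Kx$.

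The main obstacle is Assumption~\ref{assm:3}, the dynamical feasibility (forward invariance) of $\mcK$, which I would establish directly from the CBF structure rather than from forward invariance of $\mcK$ in the abstract. Along any solution the QP enforces ${-}\nabla h(\phi_{x_0}(t))^\top u^\star(\phi_{x_0}(t)) \le \alpha(h(\phi_{x_0}(t)))$, i.e. $\frac{d}{dt}h(\phi_{x_0}(t)) = \nabla h^\top u^\star \ge -\alpha\big(h(\phi_{x_0}(t))\big)$. Comparing against the scalar system $\dot\beta = -\alpha(\beta)$ with $\beta(0) = h(x_0) > 0$, whose zero solution is unique by local Lipschitzness of $\alpha$ and therefore cannot be crossed, I conclude $h(\phi_{x_0}(t)) \ge \beta(t) > 0$ for all finite $t$. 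Hence the trajectory remains in $\setdef{x \in \real^n}{h(x) > 0} \subseteq \mcK$ on its interval of existence (where $u^\star$ is continuous by Proposition~\ref{prop:regularity}, so the flow is well defined and extendable), which verifies Assumption~\ref{assm:3} for starts in $\mathrm{int}(\mcC)$ and simultaneously delivers claim~(2): the persisting bound $h(\phi_{x_0}(t)) \ge \beta(t) \ge 0$ keeps the solution in $\mcC$ for all $t \ge 0$.

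With Assumptions~\ref{assm:1}--\ref{assm:3} and $K = K^\top \preceq -\eta I_n$ in force, Theorem~\ref{thm:single-integrator} applies verbatim: its statement~(i) gives forward-completeness (claim~1), statement~(ii) gives convergence to the equilibrium set $\setdef{x \in \real^n}{\proj_{\Gamma(x)}(Kx) = \vectorzeros[n]}$ (claim~3), and, using $\vectorzeros[n] \in \mathrm{int}(\Gamma(\vectorzeros[n]))$ verified above, statement~(iii) yields the semi-global exponential bound~\eqref{eq:single-int-exp-conv} (claim~4). I expect the only delicate points to be the comparison-lemma argument underlying Assumption~\ref{assm:3} and the reading of hypothesis (iii) as $h(\vectorzeros[n]) > 0$, so that the constraints are inactive near the origin and the closed loop is locally the exponentially stable linear system $\dot x = Kx$.
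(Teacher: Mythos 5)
Your proposal is correct and takes essentially the same route as the paper: identify~\eqref{eq:CBF} as an instance of~\eqref{eq:LTI} with $A=0$, $B=I_n$, verify Assumptions~\ref{assm:1}--\ref{assm:3} together with $\vectorzeros[n]\in\mathrm{int}(\Gamma(\vectorzeros[n]))$, and then invoke Theorem~\ref{thm:single-integrator}. The only substantive difference is that you justify the dynamical feasibility assumption~\ref{assm:3} (and claim~(2)) via an explicit comparison argument on $\tfrac{d}{dt}h(\phi_{x_0}(t))\geq-\alpha(h(\phi_{x_0}(t)))$, whereas the paper disposes of it more tersely by noting that $\alpha(h(x))>0$ on $\mathrm{int}(\mcC)$ so the feasible set has an interior there (leaning on the cited CBF forward-invariance result); your version is more self-contained, and both treatments share the same implicit identification of $\mathrm{int}(\mcC)$ with $\{x:h(x)>0\}$ near the points of interest.
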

\begin{proof}
	\begin{lcss}
		It is straightforward to verify Assumptions~\ref{assm:1}-\ref{assm:3} and that $\vectorzeros[n] \in \interior(\Gamma(\vectorzeros[n]))$.
	\end{lcss}
	\begin{arxiv}
	We simply need to verify the assumptions of Theorem~\ref{thm:single-integrator}, namely Assumptions~\ref{assm:1}-\ref{assm:3}, and that $\vectorzeros[n] \in \mathrm{int}(\Gamma(\vectorzeros[n]))$. We can see Assumption~\ref{assm:1} holds by smoothness of $h$ and $\alpha$. We can see that Assumption~\ref{assm:1.5} holds since $\vectorzeros[n] \in \mathrm{int}(\mcC)$ and that $\vectorzeros[n] \in \Gamma(x)$ for $x \in \mcK$ by the assumption that $h$ is a CBF for $\mcC$ and that $\alpha(h(x)) > 0$ for $x \in \mathrm{int}(\mcC)$. Assumption~\ref{assm:2} is clear and Assumption~\ref{assm:3} is guaranteed since the feasible set of~\eqref{eq:CBF} has an interior for all $x \in \mathrm{int}(\mcC)$ since $\alpha(h(x)) > 0$ for $x \in \mathrm{int}(\mcC)$. Finally, we can see that $\vectorzeros[n] \in \mathrm{int}(\Gamma(\vectorzeros[n]))$ since $\vectorzeros[n] \in \mathrm{int}(\mcC)$. Since these assumptions hold, the consequences of Theorem~\ref{thm:single-integrator} apply.
	\end{arxiv}
\end{proof}
In general, we cannot conclude uniqueness of equilibria. As we will see in the following example, the dynamics may have multiple equilibria, even in the case of simple CBFs. These results agree with the theory presented in~\cite{MFR-APA-PT:21,XT-DVD:24}.

\begin{example}
	Consider a two-dimensional single integrator attempting to avoid a disk-shaped obstacle centered at $(0,4)$ with radius $2$. The corresponding CBF is $h(x_1,x_2) = x_1^2 + (x_2-4)^2 - 4$ with $\alpha(r) = r$. We take $K = \left[\begin{smallmatrix} -2 & -0.5 \\ -0.5 & -1 \end{smallmatrix}\right]$ and $\bar{u} = 1$. We plot numerical simulations of~\eqref{eq:CBF} with these parameters along with the corresponding convergence rate and value of CBF along trajectories in Figure~\ref{fig:cbf-trajectories}.
	
	We can observe that all trajectories converge to the set of equilibria and the majority of them converge to the origin. Moreover, we see that the exponential convergence rate predicted in~\eqref{eq:single-int-exp-conv} appears to be tight in this example and that all trajectories remain in $\mathrm{int}(\mcC)$ for all $t \geq 0$. While the equilibrium point on the boundary of the safe set is unavoidable in this example, we can numerically observe that there are no equilibria inside $\interior(\mcC)$ other than the origin. This result is in contrast with the CLF-CBF controllers studied in~\cite{MFR-APA-PT:21,XT-DVD:24}, whereby there may exist additional undesirable equilibria inside $\interior(\mcC)$. This numerical example provides evidence that one does not need to enforce any CLF decrease condition to ensure convergence to the origin, except on a set of measure zero.
\end{example}

\section{Discussion and Future Work}
In this letter we study LTI systems with controllers solving a special class of parametric programs, namely parametric projections. Using the virtual system method and a novel contractivity result for Lur'e systems, we provide a novel set of sufficient conditions for the exponential stability of these systems. For the special case of single integrators, we prove convergence to the set of equilibria and an exponential convergence rate for trajectories converging to the origin. As applications, we apply our results to state-dependent saturated control systems and to CBF-based control. 

We believe that there are many avenues to extend the results in this
letter. First, it would be interesting to see how the
Assumptions~\ref{assm:1}-\ref{assm:3} can be relaxed to allow for a larger
class of LTI systems, possibly leveraging the analysis tools
in~\cite{AVP-AD-FB:22a}. Second, we believe that our analysis method could
continue to apply for classes of structured nonlinear systems such as
controlled Lur'e systems. Finally, it is important to understand properties
of systems whose controller only approximately solve the parametric
program.

\appendix
\newcommand{\mcN}{\mathcal{N}}
\newcommand{\mcV}{\mathcal{V}}
\newcommand{\dist}{\mathrm{dist}}
\begin{arxiv}
	\subsection{Auxiliary Results in Set-Valued Analysis}
	We recall some definitions from set-valued analysis and refer to~\cite[Section~2.1.3]{FF-JSP:03} for a more comprehensive treatment. 
	
	A point-to-set map, also called a set-valued map, is a map, $K$, from $\real^n$ to the power set of $\real^m$, i.e., the set of subsets of $\real^m$. We will denote a point-to-set map by $K: \real^n \rightrightarrows \real^m$. The \emph{domain} of $K$ is denoted $\dom(K)$ and is the set $\dom(K) := \setdef{x \in \real^n}{K(x) \neq \emptyset}$. 
	\begin{definition}[Lower semicontinuity]
		A set-valued map $K: \real^n \rightrightarrows \real^m$ is \emph{lower semicontinuous} at a point $\bar{x}$ if for every open set $\mcU \subseteq \real^m$ such that $K(\bar{x}) \cap \mcU \neq \emptyset$, there exists an open neighborhood $\mcN \subseteq \real^n$ of $\bar{x}$ such that, for each $x \in \mcN$, $K(x) \cap \mcU \neq \emptyset$.
	\end{definition}
	
	\begin{definition}[Upper semicontinuity]
		A set-valued map $K: \real^n \rightrightarrows \real^m$ is \emph{upper semicontinuous} at a point $\bar{x}$ if for every open set $\mcV \subseteq \real^m$ containing $K(\bar{x})$, there exists an open neighborhood $\mcN \subseteq \real^n$ of $\bar{x}$ such that, for each $x \in \mcN$, $\mcV$ contains $K(x)$.
	\end{definition}
	We say a set-valued map $K: \real^n \rightrightarrows \real^m$ is \emph{continuous} at $x$ if it is both lower semicontinuous and upper semicontinuous at $x$. 
	
	Suppose that $K: \real^n \rightrightarrows \real^m$ is a \emph{closed-valued} set-valued map, i.e., $K(x)$ is a closed set for all $x \in \dom(K)$. Then for every $x \in \dom(K)$, the distance function $\dist(\cdot,K(x))$ is well-defined, where
	\begin{equation}
	\dist(z,K(x)) = \inf_{y \in K(x)} \|y - z\|_2.
	\end{equation}
	The \emph{liminf} of $K(y)$ as $y$ tends to $x$, denoted $\liminf_{y \to x} K(y)$ is defined to be the set of vectors $z \in \real^m$ such that
	\begin{equation}
	\lim_{y \to x} \dist(z,K(y)) = 0.
	\end{equation}
	Similarly, the \emph{limsup} of $K(y)$ as $y$ tends to $x$, denoted by $\limsup_{y \to x} K(y)$ is defined to be the set of vectors $z \in \real^m$ such that
	\begin{equation}
	\liminf_{y \to x} \dist(z,K(y)) = 0.
	\end{equation}
	Note that for every $x \in \real^n$,
	$\liminf_{y \to x} K(y) \subseteq K(x) \subseteq \limsup_{y \to x} K(y)$. If equality holds between the liminf and limsup sets, we write the common set as $\lim_{y \to x} K(y)$ and call it the limit of $K(y)$ as $y$ tends to $x$.
	
	We will now require some auxiliary results.
	
	\begin{prop}[{\cite[Proposition~2.1.17(c)]{FF-JSP:03}}]\label{prop:continuous-limits}
		Suppose $K: \real^n \rightrightarrows \real^m$ is a closed-valued point-to-set map. 
		If $K$ is continuous at $x$, then
		\begin{equation}
		\lim_{y \to x} K(y) = K(x).
		\end{equation}
	\end{prop}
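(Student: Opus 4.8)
The plan is to prove the two set equalities $\liminf_{y\to x} K(y) = K(x)$ and $\limsup_{y\to x} K(y) = K(x)$ separately, and then conclude that the common limit exists and equals $K(x)$. Since the sandwiching inclusions $\liminf_{y\to x} K(y) \subseteq K(x) \subseteq \limsup_{y\to x} K(y)$ always hold (as noted just before the statement), it suffices to establish the two reverse inclusions: $K(x) \subseteq \liminf_{y\to x} K(y)$, which I would obtain from lower semicontinuity, and $\limsup_{y\to x} K(y) \subseteq K(x)$, which I would obtain from upper semicontinuity together with closed-valuedness of $K$.

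First I would handle the lower semicontinuity direction. Fix $z \in K(x)$ and $\varepsilon > 0$, and take the open ball $\mcU = \setdef{w \in \real^m}{\|w - z\|_2 < \varepsilon}$. Since $z \in K(x) \cap \mcU$, lower semicontinuity at $x$ provides a neighborhood $\mcN$ of $x$ with $K(y) \cap \mcU \neq \emptyset$ for all $y \in \mcN$; any point in this intersection witnesses $\dist(z, K(y)) < \varepsilon$. As $\varepsilon$ is arbitrary, $\lim_{y\to x}\dist(z, K(y)) = 0$, i.e., $z \in \liminf_{y\to x} K(y)$.

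Next I would handle the upper semicontinuity direction by contradiction. Take $z \in \limsup_{y\to x} K(y)$, so there is a sequence $y_k \to x$ with $\dist(z, K(y_k)) \to 0$, and suppose $z \notin K(x)$. Closedness of $K(x)$ gives $d := \dist(z, K(x)) > 0$, so that $\mcV = \setdef{w \in \real^m}{\dist(w, K(x)) < d/2}$ is open and contains $K(x)$. Upper semicontinuity at $x$ then yields a neighborhood $\mcN$ of $x$ with $K(y) \subseteq \mcV$ for all $y \in \mcN$. Choosing $w_k \in K(y_k)$ with $\|w_k - z\|_2 < \dist(z, K(y_k)) + 1/k$ forces $w_k \to z$, and for all large $k$ (so that $y_k \in \mcN$) we have $w_k \in \mcV$, hence $\dist(w_k, K(x)) < d/2$. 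Passing to the limit via the $1$-Lipschitz continuity of $\dist(\cdot, K(x))$ yields $\dist(z, K(x)) \leq d/2 < d$, a contradiction; therefore $z \in K(x)$.

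Combining the two inclusions with the general sandwiching gives $\liminf_{y\to x} K(y) = \limsup_{y\to x} K(y) = K(x)$, so the limit exists and equals $K(x)$, as claimed. I expect the upper semicontinuity step to be the main obstacle, since it is the only place where closed-valuedness is genuinely used, and where one must carefully extract convergent witnesses from the $\limsup$ condition and pass to the limit using continuity of the distance function; the lower semicontinuity step is a direct unwinding of the definitions.
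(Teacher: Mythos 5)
The paper offers no proof of this proposition at all---it is imported verbatim from \cite{FF-JSP:03} as Proposition~2.1.17(c)---so there is nothing in-paper to compare against; your argument is correct and is the standard one: lower semicontinuity yields $K(x)\subseteq\liminf_{y\to x}K(y)$, upper semicontinuity together with closed-valuedness yields $\limsup_{y\to x}K(y)\subseteq K(x)$, and the always-valid sandwich inclusions quoted just before the statement finish the job. The one point you gloss over is the degenerate case $K(x)=\emptyset$, where $d=\dist(z,K(x))$ is not a finite positive number; this is harmless here, since the paper only defines the distance function on $\dom(K)$, and that case is dispatched directly by taking the open set $\mcV=\emptyset$ in the definition of upper semicontinuity to force $K(y)=\emptyset$ near $x$.
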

	
	\begin{prop}[{\cite[Lemma~2.8.2]{FF-JSP:03}}]\label{prop:continuous-parametric-proj}
		Let $K: \real^p \rightrightarrows \real^n$ be a closed-valued and convex-valued point-to-set map. Let $\bar{x} \in \dom(K)$ be given. The (single-valued) map $\Phi(x,y) := \proj_{K(x)}(y)$ is continuous at $(\bar{x},y)$ for all $y \in \real^n$ if and only if
		\begin{equation}
			\lim_{x \to \bar{x}} K(x) = K(\bar{x}).
		\end{equation}
	\end{prop}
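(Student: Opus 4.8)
The plan is to prove the biconditional by first unpacking the set-limit $\lim_{x \to \bar{x}} K(x) = K(\bar{x})$ into the two nontrivial inclusions $K(\bar{x}) \subseteq \liminf_{x \to \bar{x}} K(x)$ and $\limsup_{x \to \bar{x}} K(x) \subseteq K(\bar{x})$; the reverse inclusions hold automatically by the definitions recalled above, so that $\lim_{x \to \bar{x}} K(x) = K(\bar{x})$ is equivalent to both of these inclusions holding. The second key tool is the variational characterization of the Euclidean projection onto a closed convex set $C$: $z = \proj_{C}(y)$ if and only if $z \in C$ and $(w - z)^\top(y - z) \le 0$ for every $w \in C$. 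Everything reduces to moving this inequality, and the approximating feasible points, through limits.

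For the easier implication, I would assume $\Phi$ is continuous at $(\bar{x}, y)$ for all $y$. To obtain $K(\bar{x}) \subseteq \liminf$, fix $w \in K(\bar{x})$ and specialize $y = w$: since $\proj_{K(\bar{x})}(w) = w$, continuity forces $\proj_{K(x)}(w) \to w$, whence $\dist(w, K(x)) \le \twonorm{\proj_{K(x)}(w) - w} \to 0$, so $w \in \liminf_{x \to \bar{x}} K(x)$. To obtain $\limsup \subseteq K(\bar{x})$, take $z \in \limsup$, produce $x_k \to \bar{x}$ and $z_k \in K(x_k)$ with $z_k \to z$, observe $\dist(z, K(x_k)) \le \twonorm{z - z_k} \to 0$ so that $\proj_{K(x_k)}(z) \to z$, and then invoke continuity of $\Phi$ at $(\bar{x}, z)$ to identify the same limit as $\proj_{K(\bar{x})}(z)$; equating the two limits gives $z = \proj_{K(\bar{x})}(z)$, i.e., $z \in K(\bar{x})$.

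For the converse, I would assume $\lim_{x \to \bar{x}} K(x) = K(\bar{x})$, fix $y$, and show $\proj_{K(x_k)}(y_k) \to \proj_{K(\bar{x})}(y)$ for every $(x_k, y_k) \to (\bar{x}, y)$. First, nonemptiness of $K(\bar{x})$ together with the liminf inclusion forces $K(x)$ nonempty for $x$ near $\bar{x}$, so the projection is defined; picking any $w \in K(\bar{x})$ with approximants $w_k \in K(x_k)$, $w_k \to w$, yields the uniform bound $\twonorm{\proj_{K(x_k)}(y_k) - y_k} \le \twonorm{w_k - y_k}$, hence boundedness of the projected points. Along any convergent subsequence with limit $z^\ast$, the limsup inclusion gives $z^\ast \in K(\bar{x})$; passing to the limit in the projection inequality $(w_k - \proj_{K(x_k)}(y_k))^\top(y_k - \proj_{K(x_k)}(y_k)) \le 0$, with $w_k \to w$ for arbitrary $w \in K(\bar{x})$, yields $(w - z^\ast)^\top(y - z^\ast) \le 0$ for all $w \in K(\bar{x})$, so $z^\ast = \proj_{K(\bar{x})}(y)$ by the variational characterization. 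Since the sequence is bounded and every subsequential limit equals this common value, the whole sequence converges, establishing continuity.

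The main obstacle I anticipate is the converse direction, specifically the uniform passage to the limit in the variational inequality: for each fixed $w \in K(\bar{x})$ I must produce approximating feasible points $w_k \in K(x_k)$ (supplied precisely by the liminf inclusion) and simultaneously control the convergence of $w_k$, $y_k$, and the projections so the inequality survives. Establishing a priori boundedness of the projected points, and arguing that every subsequential limit is both feasible (via limsup) and optimality-satisfying (via the passed inequality), is the technical heart; once the set-limit is decomposed into its liminf and limsup parts, the remaining estimates are routine.
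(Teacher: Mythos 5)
Your proof is correct. Note, however, that the paper does not prove this proposition at all: it is imported verbatim as Lemma~2.8.2 of the cited reference [FF-JSP:03], so there is nothing to compare against except that source, whose argument is essentially the one you give (variational characterization of the projection plus the $\liminf$/$\limsup$ decomposition of Painlev\'e--Kuratowski set convergence). Your treatment of the two technical points that usually cause trouble is sound: the $\liminf$ inclusion forces $K(x)\neq\emptyset$ near $\bar{x}$ (since $\dist(w,K(x))=+\infty$ when $K(x)=\emptyset$), which makes the projections well defined, and the a priori bound $\twonorm{\proj_{K(x_k)}(y_k)-y_k}\leq\twonorm{w_k-y_k}$ together with the identification of every subsequential limit via the passed variational inequality closes the converse direction.
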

	
	\begin{prop}[{\cite[Exercise 2.9.34]{FF-JSP:03}}]\label{prop:scalar-continuous}
		Let $\map{g}{\real^{n} \times \real^m}{\real}$ be continuous. Let $\bar{x} \in \real^n$, such that $g(\bar{x},\cdot)$ is a convex function and there exists $\bar{u} \in \real^m$ such that $g(\bar{x},\bar{u}) < 0$. Then the set-valued map $\Gamma: \real^n \rightrightarrows \real^m$ defined by 
		\begin{equation}
			\Gamma(x) := \setdef{u \in \real^m}{g(x,u) \leq 0},
		\end{equation}
		is continuous at $\bar{x}$.
	\end{prop}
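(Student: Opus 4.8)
The plan is to establish the two halves of continuity at $\bar{x}$ separately: the lower-semicontinuity inclusion $\Gamma(\bar{x}) \subseteq \liminf_{x \to \bar{x}} \Gamma(x)$ and the outer inclusion $\limsup_{x \to \bar{x}} \Gamma(x) \subseteq \Gamma(\bar{x})$. Since each $\Gamma(x)$ is a sublevel set of the continuous function $g(x,\cdot)$, the map is closed-valued, and $\Gamma(\bar{x})$ is convex by hypothesis; combined with the sandwich $\liminf_{x\to\bar{x}}\Gamma(x) \subseteq \Gamma(\bar{x}) \subseteq \limsup_{x\to\bar{x}}\Gamma(x)$ already recorded above, these two inclusions pin down $\lim_{x\to\bar{x}}\Gamma(x) = \Gamma(\bar{x})$, which is exactly the form of continuity consumed by Proposition~\ref{prop:continuous-parametric-proj}.

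For lower semicontinuity --- the step I expect to be the crux --- the idea is to exploit the Slater point $\bar{u}$ together with convexity of $g(\bar{x},\cdot)$. Following the definition, fix an open $\mcU$ with $\Gamma(\bar{x}) \cap \mcU \neq \emptyset$ and pick $u_0 \in \Gamma(\bar{x}) \cap \mcU$, so that $g(\bar{x},u_0) \leq 0$. Along the segment $u_\theta := (1-\theta)u_0 + \theta \bar{u}$, convexity yields
\[
	g(\bar{x},u_\theta) \leq (1-\theta) g(\bar{x},u_0) + \theta g(\bar{x},\bar{u}) \leq \theta\, g(\bar{x},\bar{u}) < 0
\]
for every $\theta \in (0,1)$, so every interior point of the segment is \emph{strictly} feasible at $\bar{x}$. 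Because $u_\theta \to u_0 \in \mcU$ as $\theta \to 0^+$ and $\mcU$ is open, I can fix one small $\theta$ with $u_\theta \in \mcU$; continuity of $g$ then gives a neighborhood $\mcN$ of $\bar{x}$ on which $g(x,u_\theta) < 0$, whence $u_\theta \in \Gamma(x) \cap \mcU$ for all $x \in \mcN$. This is precisely lower semicontinuity.

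For the outer inclusion I would use only continuity of $g$, which makes the graph $\{(x,u) : g(x,u) \leq 0\}$ closed. Indeed, any $z \in \limsup_{x\to\bar{x}}\Gamma(x)$ is a limit of points $z_k \in \Gamma(x_k)$ with $x_k \to \bar{x}$, so passing to the limit in $g(x_k,z_k) \leq 0$ gives $g(\bar{x},z) \leq 0$, i.e., $z \in \Gamma(\bar{x})$. Combining this with the lower-semicontinuity inclusion and the sandwich above completes the argument.

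The main obstacle is the lower-semicontinuity step, and within it the indispensability of strict feasibility: Slater's condition is exactly what prevents $\Gamma(x)$ from losing points or collapsing under small perturbations of $x$, while convexity is used only to slide a feasible point slightly toward $\bar{u}$ and land at a strictly feasible point inside the same open set $\mcU$. By contrast, the outer inclusion is essentially free from continuity of $g$ alone, requiring neither convexity nor the Slater condition.
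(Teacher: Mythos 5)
The paper gives no proof of this statement---it is quoted from Facchinei and Pang as Exercise~2.9.34---so there is no in-paper argument to compare against; your proposal stands on its own. On its own terms it is essentially correct and is the standard argument: for lower semicontinuity you slide a feasible $u_0\in\Gamma(\bar{x})\cap\mcU$ toward the Slater point $\bar{u}$, the convexity estimate $g(\bar{x},u_\theta)\leq\theta g(\bar{x},\bar{u})<0$ produces a strictly feasible point still inside the open set $\mcU$, and joint continuity of $g$ propagates strict feasibility of that single point to a whole neighborhood of $\bar{x}$. The outer inclusion $\limsup_{x\to\bar{x}}\Gamma(x)\subseteq\Gamma(\bar{x})$ follows from continuity of $g$ alone, and the two inclusions together give $\lim_{x\to\bar{x}}\Gamma(x)=\Gamma(\bar{x})$, which is precisely the hypothesis consumed by Proposition~\ref{prop:continuous-parametric-proj}.

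One caveat. What you prove in the second step is outer (Kuratowski) semicontinuity, i.e., closedness, not the Berge upper semicontinuity recorded in the paper's definition (every open $\mathcal{V}\supseteq\Gamma(\bar{x})$ contains $\Gamma(x)$ for all $x$ near $\bar{x}$); for unbounded-valued maps the latter is strictly stronger, and it can genuinely fail under the stated hypotheses. For example, $g(x,u)=\max\{xu_1-u_2,\,-u_1\}$ on $\real\times\real^2$ is continuous, $g(0,\cdot)$ is convex, and $\bar{u}=(1,1)$ is a Slater point at $\bar{x}=0$, yet $\Gamma(-\epsilon)=\setdef{u}{u_1\geq 0,\ u_2\geq -\epsilon u_1}$ contains $(2/\epsilon,-2)$ for every $\epsilon>0$, so no neighborhood of $0$ maps into the open set $\{u : u_2>-1\}\supseteq\Gamma(0)$. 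So you should read (and state) the conclusion as equality of the Kuratowski limit with $\Gamma(\bar{x})$---which is all that Proposition~\ref{prop:continuous-parametric-proj} requires---rather than as the conjunction of the two semicontinuity definitions given in the appendix; with that reading your proof is complete.
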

	
	We now are equipped with all the background needed to establish continuity of the parametric projection.
	
	\begin{lemma}\label{cor:proj-continuity}
		Let $\map{g}{\real^n \times \real^m}{\real^p}$ be continuous. Let $\bar{x} \in \real^n$ such that $g(\bar{x},\cdot)$ is a convex function and there exists $\bar{u} \in \real^m$ such that $g(\bar{x},\bar{u}) \ll \vectorzeros[p]$. Define the set-valued map $\Gamma: \real^n \rightrightarrows \real^m$ given by
		\begin{equation}
			\Gamma(x) := \setdef{u \in \real^m}{g(x,u) \leq \vectorzeros[p]}.
		\end{equation}
		Then the map $\Phi(x,u) := \proj_{\Gamma(x)}(u)$ is continuous at $(\bar{x},u)$ for all $u \in \real^m$.
	\end{lemma}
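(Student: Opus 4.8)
The plan is to reduce the vector-valued constraint to a single scalar constraint and then chain together Propositions~\ref{prop:scalar-continuous}, \ref{prop:continuous-limits}, and~\ref{prop:continuous-parametric-proj}. Concretely, I would introduce the scalar function $\tilde{g}(x,u) := \max_{i \in \until{p}} g_i(x,u)$, where $g_i$ denotes the $i$-th component of $g$, and observe that the feasible set admits the equivalent description $\Gamma(x) = \setdef{u \in \real^m}{\tilde{g}(x,u) \leq 0}$, since $g(x,u) \leq \vectorzeros[p]$ holds exactly when every component is nonpositive, that is, when the largest component is nonpositive. This $\max$-reformulation is the crux of the reduction, because the propositions I intend to use are all stated for a single scalar inequality.

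First I would verify that $\tilde{g}$ satisfies the hypotheses of Proposition~\ref{prop:scalar-continuous} at $\bar{x}$. Continuity of $\tilde{g}$ is immediate, being the pointwise maximum of finitely many continuous functions. Convexity of $\tilde{g}(\bar{x},\cdot)$ follows because each $g_i(\bar{x},\cdot)$ is convex (this is precisely what componentwise convexity of the vector map $g(\bar{x},\cdot)$ asserts) and the pointwise maximum of convex functions is convex. Finally, the Slater point transfers directly: since $g(\bar{x},\bar{u}) \ll \vectorzeros[p]$ means $g_i(\bar{x},\bar{u}) < 0$ for every $i$, we obtain $\tilde{g}(\bar{x},\bar{u}) = \max_i g_i(\bar{x},\bar{u}) < 0$. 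Proposition~\ref{prop:scalar-continuous} then yields that the set-valued map $\Gamma$ is continuous at $\bar{x}$.

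With continuity of $\Gamma$ at $\bar{x}$ established, I would finish in two steps. Because $\Gamma(x)$ is a sublevel set of $\tilde{g}(x,\cdot)$, which is continuous in its second argument, $\Gamma$ is closed-valued; hence Proposition~\ref{prop:continuous-limits} applies and gives $\lim_{x \to \bar{x}} \Gamma(x) = \Gamma(\bar{x})$. Feeding this limit identity into Proposition~\ref{prop:continuous-parametric-proj} then delivers continuity of $\Phi(x,u) = \proj_{\Gamma(x)}(u)$ at $(\bar{x},u)$ for every $u \in \real^m$, which is the assertion of the lemma.

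The step I expect to require the most care is the invocation of Proposition~\ref{prop:continuous-parametric-proj}, which presumes $\Gamma$ to be convex-valued so that the projection is genuinely single-valued and the limit characterization is available; here convexity of $g(x,\cdot)$ is guaranteed by the hypotheses only at $\bar{x}$, whereas the intended application supplies it for all $x$ through Assumption~\ref{assm:1}. I would therefore either state the lemma under the standing convexity of $g(x,\cdot)$ or restrict attention to a neighborhood of $\bar{x}$ on which convex-valuedness is ensured. An alternative route that bypasses the $\max$ reformulation would write $\Gamma(x) = \bigcap_{i} \setdef{u \in \real^m}{g_i(x,u) \leq 0}$ and prove continuity of the intersection directly; there the genuine difficulty is \emph{lower} semicontinuity of the intersection, which is exactly what the common Slater point $\bar{u}$ lying in the strict interior of every constraint set is needed to secure. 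The $\max$ trick is preferable precisely because it offloads this difficulty onto Proposition~\ref{prop:scalar-continuous}.
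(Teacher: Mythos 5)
Your proof is correct, and it takes a genuinely different route from the paper's. The paper decomposes the feasible set as $\Gamma(x) = \bigcap_{i=1}^p \Gamma_i(x)$ with $\Gamma_i(x) = \setdef{u \in \real^m}{g_i(x,u) \leq 0}$, applies Proposition~\ref{prop:scalar-continuous} to each $\Gamma_i$ separately, and then argues lower and upper semicontinuity of the finite intersection by hand before invoking Propositions~\ref{prop:continuous-limits} and~\ref{prop:continuous-parametric-proj}. You instead collapse the vector constraint into the single scalar constraint $\tilde{g}(x,u) := \max_{i \in \until{p}} g_i(x,u) \leq 0$, verify that $\tilde{g}$ inherits continuity, convexity in $u$ at $\bar{x}$, and the Slater point, and apply Proposition~\ref{prop:scalar-continuous} exactly once; the remaining steps (closed-valuedness, then Propositions~\ref{prop:continuous-limits} and~\ref{prop:continuous-parametric-proj}) coincide with the paper's. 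Your reduction is arguably the cleaner of the two, because semicontinuity does not pass through finite intersections for free: knowing $\Gamma_i(x) \cap \mathcal{X} \neq \emptyset$ for every $i$ does not by itself yield $\bigl(\bigcap_i \Gamma_i(x)\bigr) \cap \mathcal{X} \neq \emptyset$, and $\Gamma(\bar{x}) \subseteq \mathcal{V}$ does not yield $\Gamma_i(\bar{x}) \subseteq \mathcal{V}$ for each $i$, so the paper's ``routine to check'' steps conceal precisely the difficulty you identify --- the one the common Slater point is needed to resolve. The $\max$ reformulation offloads all of this onto the cited scalar proposition. Your closing caveat is also well taken: Proposition~\ref{prop:continuous-parametric-proj} requires $\Gamma$ to be convex-valued so that the projection is single-valued near $\bar{x}$, whereas the lemma's hypotheses only assert convexity of $g(\bar{x},\cdot)$; the paper leaves this implicit, relying on the standing convexity of $g(x,\cdot)$ for all $x$ supplied by Assumption~\ref{assm:1} in the intended application.
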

	\begin{proof}
		We simply need to extend Proposition~\ref{prop:scalar-continuous} to the case that $g$ is vector-valued. For $i \in \until{p}$, define the set-valued maps $\Gamma_i(x) := \setdef{u \in \real^m}{g_i(x,u) \leq 0}$ and note that $\Gamma(x) = \bigcap_{i=1}^p \Gamma_i(x)$. We simply need to show upper and lower semicontinuity of $\Gamma$ to conclude continuity. Regarding lower-semicontinuity, let $\bar{x} \in \dom(\Gamma)$ be such that there exists $\bar{u}$ with $g(\bar{x},\bar{u}) \ll \vectorzeros[p]$ and let $\mcX \subseteq \real^n$ be open and satisfy $\Gamma(\bar{x}) \cap \mcX \neq \emptyset$. By continuity of each of the $\Gamma_i$ at $\bar{x}$ (due to Proposition~\ref{prop:scalar-continuous}), there exists an open set $\mathcal{N}_i$ containing $\bar{x}$ such that for each $x^i \in \mathcal{N}_i$, $\Gamma_i(x^i) \cap \mcX \neq \emptyset$ for all $i \in \until{p}$. Since a finite intersection of open sets is open, let $\mathcal{N} := \bigcap_{i=1}^p \mathcal{N}_i$. Then it is routine to check that for each $x \in \mathcal{N}$, $\Gamma(x) \cap \mcX \neq \emptyset$. This establishes lower semicontinuity. 
		
		To establish upper semicontinuity, let $\bar{x}$ satisfy the stated assumptions and let $\mathcal{V}$ be an open set with $\Gamma(\bar{x}) \subseteq \mathcal{V}$. By continuity of each of the $\Gamma_i$ at $\bar{x}$, there exists an open neighborhood $\mathcal{N}_i$ of $\bar{x}$ such that for each $x^i \in \mathcal{N}_i$, $\Gamma_i(x) \subseteq \mathcal{V}$. Once again taking $\mathcal{N} = \bigcap_{i=1}^p \mathcal{N}$ ensures that for all $x \in \mathcal{N}$, we have that $\Gamma(x) \subseteq \mathcal{V}$. This establishes upper semicontinuity and thus continuity of $\Gamma$ at $\bar{x}$. 
		
		The result is then a consequence of Propositions~\ref{prop:continuous-limits} and~\ref{prop:continuous-parametric-proj}.
	\end{proof}
\end{arxiv}

\begin{arxiv}
	\newcommand{\mcA}{\mathcal{A}}
	\subsection{A Continuity Result from Fixed Point Theory}
	To establish continuity and Lipschitzness of solutions to parametric optimization problems, it is oftentimes useful to first express them as fixed point problems. From there, we can leverage results from a parametrized version of the Banach fixed point theorem. We reproduce this result in the following proposition.
	\begin{prop}[{\cite[Section I.6.A. Item (A.4) and Section I.3. Theorem 3.2]{AG-JD:03}}]\label{lem:contr-continuity}
		Let $(\mcX,d)$ be a complete metric space, $(\mcA,\varrho)$ be a metric space, and $\map{f}{\mcX \times \mcA}{\mcX}$ satisfy the following properties:
		\begin{enumerate}
			\item There exists $\ell < 1$ such that for each $a \in \mcA$, the map $f(\cdot,a)$ is contractive with constant $\ell$, i.e., for all $x_1,x_2 \in \mcX$,
			\begin{equation}
				d(f(x_1,a),f(x_2,a)) \leq \ell d(x_1,x_2).
			\end{equation}
			
			\item For each $x \in \mcX$, the map $a \mapsto f(x,a)$ is continuous.
		\end{enumerate}
		Then for each $a \in \mcA$, there exists a unique $x^\star(a)$ satisfying the fixed point equation $x^\star(a) = f(x^\star(a),a)$ and the mapping $a \mapsto x^\star(a)$ is continuous.
		
		Moreover, if there exists $L > 0$ such that for each $x \in \real^n$, the map $a \mapsto f(x,a)$ is Lipschitz with constant $L$, then the mapping $a \mapsto x^\star(a)$ is Lipschitz with estimate $L/(1-\ell)$.
	\end{prop}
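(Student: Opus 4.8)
The plan is to combine the classical Banach fixed point theorem applied separately for each parameter value with a self-bounding estimate that exploits the uniform contraction constant $\ell < 1$. First I would establish existence and uniqueness of the fixed point $x^\star(a)$: for each fixed $a \in \mcA$, the map $f(\cdot, a)$ is a contraction on the complete metric space $(\mcX, d)$ with constant $\ell$, so the standard Banach fixed point theorem yields a unique fixed point, denoted $x^\star(a)$. This proves the first assertion pointwise in $a$ and simultaneously defines the candidate map $a \mapsto x^\star(a)$ whose regularity must then be studied.

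For continuity, fix a base point $a_0 \in \mcA$ and an arbitrary $a \in \mcA$. The key idea is to use the fixed point identities $x^\star(a) = f(x^\star(a), a)$ and $x^\star(a_0) = f(x^\star(a_0), a_0)$ and to insert the intermediate term $f(x^\star(a_0), a)$. Applying the triangle inequality, then contractivity in the first argument to the pair $f(x^\star(a), a), f(x^\star(a_0), a)$ while leaving the residual $f(x^\star(a_0), a), f(x^\star(a_0), a_0)$ untouched, yields
\[
d(x^\star(a), x^\star(a_0)) \leq \ell\, d(x^\star(a), x^\star(a_0)) + d\bigl(f(x^\star(a_0), a), f(x^\star(a_0), a_0)\bigr).
\]
Because $\ell < 1$ and $d(x^\star(a), x^\star(a_0))$ is finite, I can subtract $\ell\, d(x^\star(a), x^\star(a_0))$ from both sides and divide by $1 - \ell$ to obtain
\[
d(x^\star(a), x^\star(a_0)) \leq \frac{1}{1 - \ell}\, d\bigl(f(x^\star(a_0), a), f(x^\star(a_0), a_0)\bigr).
\]
Continuity then follows immediately from hypothesis (ii): as $a \to a_0$ the right-hand side tends to $0$, since $a \mapsto f(x^\star(a_0), a)$ is continuous at $a_0$ with the point $x^\star(a_0)$ held fixed.

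The Lipschitz refinement reuses the very same inequality. Under the additional uniform Lipschitz hypothesis, the residual term satisfies $d(f(x^\star(a_0), a), f(x^\star(a_0), a_0)) \leq L\,\varrho(a, a_0)$ (taking the fixed point $x = x^\star(a_0)$ in the Lipschitz bound), so the displayed estimate sharpens to $d(x^\star(a), x^\star(a_0)) \leq \tfrac{L}{1 - \ell}\,\varrho(a, a_0)$, which is exactly the claimed constant.

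The only step requiring genuine care — the main, though mild, obstacle — is the self-referential inequality in which $d(x^\star(a), x^\star(a_0))$ appears on both sides. The rearrangement is legitimate precisely because this quantity is finite (both arguments lie in $\mcX$, so their distance is a well-defined real number) and because $1 - \ell > 0$; if $\ell$ were allowed to equal $1$ the argument would collapse. Beyond this observation, no machinery other than the triangle inequality and the two stated hypotheses is needed, so I expect the remainder of the proof to be entirely routine.
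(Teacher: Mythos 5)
Your proof is correct, and it takes a genuinely different route from the one in the paper's source. The paper's argument establishes continuity by running the Picard iteration for $f(\cdot,b)$ starting from the seed $x^\star(a)$, bounding $d(x^\star(a),x^k_b)$ by a geometric series via assumption (i), and passing to the limit; it then defers the Lipschitz refinement to a separate citation (a theorem of Lim). You instead use the single self-referential inequality
\begin{equation*}
d(x^\star(a),x^\star(a_0)) \leq \ell\, d(x^\star(a),x^\star(a_0)) + d\bigl(f(x^\star(a_0),a),f(x^\star(a_0),a_0)\bigr),
\end{equation*}
rearranged using $1-\ell>0$ and the finiteness of the distance (automatic in a metric space). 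This is more economical: one inequality yields both the continuity statement (from hypothesis (ii)) and the quantitative Lipschitz constant $L/(1-\ell)$ without any appeal to an external result, whereas the iterative route is more constructive (it tracks how the fixed point is approached) but requires the extra bookkeeping of the geometric series and an outside reference for the Lipschitz part. Both arguments are standard and complete; yours is the sharper self-contained version of the same underlying contraction estimate.
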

\end{arxiv}

\begin{arxiv}
	\subsection{Well-Posedness of Virtual System}

Given $x_0 \in \mcK$, we're interested in the properties of the map $\map{\bar{u}}{\real^m \times [0,\taumax{x_0})}{\real^m}$ given by
\begin{equation}\label{eq:virtual-opt}
	\begin{aligned}
		\bar{u}(z,t) := &\argmin_{u \in \real^m} &&f(z,u) \\
		& \text{s.t.} && g(\phi_{x_0}(t),u) \leq \vectorzeros[p].
	\end{aligned}
\end{equation}
This is exactly the map that appears in the virtual system analysis, i.e., for $f(z,u) = \frac{1}{2}\|u - z\|_2^2$, the virtual system in the proof of Theorem~\ref{thm:general-LTI} is $\dot{y} = Ay + B\bar{u}(Ky,t)$. 

\begin{lemma}\label{lem:virtual-sys-regularity}
	Suppose that for all $z \in \real^n$, $f(z, \cdot)$ is strongly convex and $g(z, \cdot)$ is convex. Consider the ODE $\dot{x} = Ax + Bu^\star(x)$, where $u^\star(x)$ is the solution to~\eqref{eq:PNLP}. For $x_0 \in \mcK$, let $\phi_{x_0}$ denote a solution of the ODE from initial condition $x_0$ on the interval $[0,\taumax{x_0})$. Assume that for all $t \in [0,\taumax{x_0})$ that $\phi_{x_0}(t) \in \mcK$ and that $\nabla_u f$ is Lipschitz in both of its arguments. Then the map $\bar{u}$ in~\eqref{eq:virtual-opt} is Lipschitz in its first argument and continuous in its second.
\end{lemma}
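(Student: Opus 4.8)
The plan is to establish the two regularity claims separately, leveraging the fixed-point formulation of strongly convex parametric optimization together with the set-valued continuity machinery developed in the earlier lemmas.

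The plan is to express $\bar u(z,t)$ as the unique fixed point of a projected-gradient operator and then invoke the parametrized Banach fixed point theorem, Proposition~\ref{lem:contr-continuity}, to transfer regularity of the problem data onto the fixed point. Since $f(z,\cdot)$ is strongly convex and the feasible set $\Gamma(\phi_{x_0}(t))$ is closed and convex, the minimizer $\bar u(z,t)$ is characterized by the projected-gradient fixed-point equation
\[
	\bar u(z,t) = \proj_{\Gamma(\phi_{x_0}(t))}\big(\bar u(z,t) - \gamma \nabla_u f(z,\bar u(z,t))\big),
\]
valid for any step size $\gamma > 0$. Define $F(u,(z,t)) := \proj_{\Gamma(\phi_{x_0}(t))}(u - \gamma \nabla_u f(z,u))$ so that $\bar u(z,t)$ is the fixed point of $F(\cdot,(z,t))$. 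First I would fix a small $\gamma > 0$ and show that $u \mapsto F(u,(z,t))$ is a contraction with factor $\ell < 1$ that is uniform over the parameters: the gradient step $u \mapsto u - \gamma\nabla_u f(z,u)$ contracts with factor $\sqrt{1 - 2\gamma\mu + \gamma^2 L^2} < 1$ for $\gamma$ small, where $\mu$ is the strong convexity modulus and $L$ is the Lipschitz constant of $\nabla_u f$ in $u$, and the projection is nonexpansive, so the composition inherits the same factor.

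To obtain Lipschitzness in the first argument, I fix $t$ and treat $z$ as the parameter in Proposition~\ref{lem:contr-continuity}. For each fixed $u$, the map $z \mapsto F(u,(z,t))$ is Lipschitz because the projection is nonexpansive and $z \mapsto \gamma\nabla_u f(z,u)$ is Lipschitz by hypothesis; since the contraction factor $\ell$ does not depend on $z$, the Lipschitz clause of Proposition~\ref{lem:contr-continuity} yields that $z \mapsto \bar u(z,t)$ is Lipschitz, with constant controlled by $\gamma L_z/(1-\ell)$, where $L_z$ is the Lipschitz constant of $\nabla_u f$ in its first argument.

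For continuity in the second argument, I fix $z$ and take $t$ as the parameter. The only $t$-dependence in $F$ enters through the feasible set $\Gamma(\phi_{x_0}(t))$. Since $\phi_{x_0}$ is continuously differentiable, $t \mapsto \phi_{x_0}(t)$ is continuous, and since $\phi_{x_0}(t) \in \mcK$ the strict-feasibility condition holds, so Lemma~\ref{cor:proj-continuity} guarantees that $x \mapsto \proj_{\Gamma(x)}(w)$ is continuous in $x$ for every $w$. Composing, $t \mapsto F(u,(z,t))$ is continuous for each fixed $u$, while the contraction factor remains a fixed constant for fixed $z$; the continuity clause of Proposition~\ref{lem:contr-continuity} then gives continuity of $t \mapsto \bar u(z,t)$.

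The main obstacle is the continuity in $t$, which rests entirely on continuity of the parametric projection $x \mapsto \proj_{\Gamma(x)}$ established in Lemma~\ref{cor:proj-continuity}; this is the delicate set-valued step, relying on continuity of the set-valued map $\Gamma$ together with the strict-feasibility condition encoded in $\phi_{x_0}(t) \in \mcK$. A secondary subtlety is guaranteeing that the contraction factor $\ell$ is uniform over $z$ in the Lipschitz claim, which requires a uniform strong convexity modulus for $f(z,\cdot)$; in the application $f(z,u) = \tfrac12\|u-z\|_2^2$ this holds with $\mu = L = 1$ and the projected-gradient map reduces to $u \mapsto \proj_{\Gamma(\phi_{x_0}(t))}((1-\gamma)u + \gamma z)$, making every estimate explicit.
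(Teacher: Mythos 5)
Your proof is correct and follows essentially the same route as the paper's: both express $\bar u(z,t)$ as the fixed point of the projected-gradient map $\proj_{\Gamma(\phi_{x_0}(t))}(u-\gamma\nabla_u f(z,u))$, obtain Lipschitzness in $z$ from the Lipschitz clause of Proposition~\ref{lem:contr-continuity}, and obtain continuity in $t$ from Lemma~\ref{cor:proj-continuity} combined with the continuity clause of the same proposition. Your explicit contraction-factor estimate and the remark on uniformity of the strong-convexity modulus are slightly more detailed than the paper's citation of standard projected-gradient results, but the argument is the same.
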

\end{arxiv}

\begin{arxiv}
	\begin{proof}
		We first make an important observation. The value of $\bar{u}$ at the point $(z,t)$ solves the fixed point equation
		\begin{equation}
			\bar{u}(z,t) = \proj_{\Gamma(\phi_{x_0}(t))}(\bar{u}(z,t) - \gamma \nabla_u f(z,\bar{u}(z,t))),
		\end{equation}
		where $\gamma > 0$ is a constant to be tuned to make the map contractive. Let $\map{\bar{f}}{\real^m \times \real^n \times [0,\taumax{x_0})}{\real^m}$ denote this fixed-point map, i.e., $\bar{f}(u,z,t) := \proj_{\Gamma(\phi_{x_0}(t))}(u - \gamma \nabla_u f(z,u)).$ Under the assumption that $\nabla_u f$ is Lipschitz, from standard results on projected gradient algorithms, e.g.,~\cite[pp.~31]{EKR-SB:16}, we know that for fixed $z,t$, the map $u \mapsto \bar{f}(u,z,t)$ is a contraction for $0 < \gamma < 2/L$, where $L$ is the Lipschitz constant of $\nabla_u f(z,\cdot)$. Henceforth, we let $\gamma$ be in this range. To establish Lipschitzness in $z$, we note that at fixed $u,t$, the mapping $z \mapsto \bar{f}(u,z,t)$ is Lipschitz under the assumption that the mapping $\nabla_u f(\cdot,u)$ is Lipschitz. Thus, at fixed $t$, we can apply Proposition~\ref{lem:contr-continuity} to conclude that the mapping $z \mapsto \bar{u}(z,t)$ is Lipschitz and thus continuous in its first argument. 
		
		To establish continuity in $t$, we first study the point-to-set mapping $t \rightrightarrows \Gamma(\phi_{x_0}(t))$. By assumption that for all $t$, we have that $\phi_{x_0}(t) \in \mcK$, we conclude by Lemma~\ref{cor:proj-continuity} that the mapping $\map{\Phi}{\real^m \times [0,\taumax{x_0})}{\real^m}$ given by $\Phi(u,t) := \proj_{\Gamma(\phi_{x_0}(t))}(u)$ is continuous on its domain. Since $\bar{f}(u,z,t) = \Phi(u - \gamma \nabla_u f(z,u),t)$, we have that $\bar{f}(u,z,\cdot)$ is continuous as well. And since for all $z,t$, we know that $u \mapsto \bar{f}(u,z,t)$ is a contraction, by Proposition~\ref{lem:contr-continuity}, we know that for all $z \in \real^m$, the map $t \mapsto \bar{u}(z,t)$ is continuous.
	\end{proof}
\end{arxiv}
\begin{lcss}
	\bibliographystyle{IEEEtran}
\end{lcss}
\begin{arxiv}
	\bibliographystyle{plainurl+isbn}
\end{arxiv}
\bibliography{alias,Main,FB,New}

\begin{thebibliography}{10}

\bibitem{ADA-XX-JWG-PT:17}
A.~D. Ames, X.~Xu, J.~W. Grizzle, and P.~Tabuada.
\newblock Control barrier function based quadratic programs for safety critical
  systems.
\newblock {\em IEEE Transactions on Automatic Control}, 62(8):3861–3876,
  2017.
\newblock \href {http://dx.doi.org/10.1109/TAC.2016.2638961}
  {\path{doi:10.1109/TAC.2016.2638961}}.

\bibitem{VA-ST:19}
V.~Andrieu and S.~Tarbouriech.
\newblock {LMI} conditions for contraction and synchronization.
\newblock In {\em IFAC Symposium on Nonlinear Control Systems}, volume~52,
  pages 616--621, 2019.
\newblock \href {http://dx.doi.org/10.1016/j.ifacol.2019.12.030}
  {\path{doi:10.1016/j.ifacol.2019.12.030}}.

\bibitem{BB-JG-DK-BK-KT:82}
B.~Bank, J.~Guddat, D.~Klatte, B.~Kummer, and K.~Tammer.
\newblock {\em Non-{L}inear Parametric Optimization}.
\newblock 1982, ISBN 9783034863285.
\newblock \href {http://dx.doi.org/10.1007/978-3-0348-6328-5}
  {\path{doi:10.1007/978-3-0348-6328-5}}.

\bibitem{GB-JC-JIP-EDA:22}
G.~Bianchin, J.~Cort\'es, J.~I. Poveda, and E.~{Dall'Anese}.
\newblock Time-varying optimization of {LTI} systems via projected primal-dual
  gradient flows.
\newblock {\em IEEE Transactions on Control of Network Systems}, 9(1):474--486,
  2022.
\newblock \href {http://dx.doi.org/10.1109/TCNS.2021.3112762}
  {\path{doi:10.1109/TCNS.2021.3112762}}.

\bibitem{FB-AB-MM:17}
F.~Borrelli, A.~Bemporad, and M.~Morari.
\newblock {\em Predictive Control for Linear and Hybrid Systems}.
\newblock Cambridge University Press, 2017, ISBN 1107016886.

\bibitem{FB:23-CTDS}
F.~Bullo.
\newblock {\em Contraction Theory for Dynamical Systems}.
\newblock Kindle Direct Publishing, {1.1} edition, 2023, ISBN 979-8836646806.
\newblock URL: \url{https://fbullo.github.io/ctds}.

\bibitem{LDA-MC:13}
L.~D'Alto and M.~Corless.
\newblock Incremental quadratic stability.
\newblock {\em Numerical Algebra, Control and Optimization}, 3:175--201, 2013.
\newblock \href {http://dx.doi.org/10.3934/naco.2013.3.175}
  {\path{doi:10.3934/naco.2013.3.175}}.

\bibitem{FF-JSP:03}
F.~Facchinei and J.-S. Pang.
\newblock {\em Finite-Dimensional Variational Inequalities and Complementarity
  Problems}.
\newblock Springer, 2003, ISBN 978-0-387-95581-0.
\newblock \href {http://dx.doi.org/10.1007/b97544} {\path{doi:10.1007/b97544}}.

\bibitem{AVF:76}
A.~V. Fiacco.
\newblock Sensitivity analysis for nonlinear programming using penalty methods.
\newblock {\em Mathematical Programming}, 10(1):287–311, 1976.
\newblock \href {http://dx.doi.org/10.1007/bf01580677}
  {\path{doi:10.1007/bf01580677}}.

\bibitem{MG-VA-ST-DA:23}
M.~Giaccagli, V.~Andrieu, S.~Tarbouriech, and D.~Astolfi.
\newblock {LMI} conditions for contraction, integral action, and output
  feedback stabilization for a class of nonlinear systems.
\newblock {\em Automatica}, 154:111106, 2023.
\newblock \href {http://dx.doi.org/10.1016/j.automatica.2023.111106}
  {\path{doi:10.1016/j.automatica.2023.111106}}.

\bibitem{AG-JD:03}
A.~Granas and J.~Dugundji.
\newblock {\em Fixed Point Theory}.
\newblock Springer, 2003, ISBN 978-1-4419-1805-5.
\newblock \href {http://dx.doi.org/10.1007/978-0-387-21593-8}
  {\path{doi:10.1007/978-0-387-21593-8}}.

\bibitem{MJ:18}
M.~Jankovic.
\newblock Robust control barrier functions for constrained stabilization of
  nonlinear systems.
\newblock {\em Automatica}, 96:359–367, 2018.
\newblock \href {http://dx.doi.org/10.1016/j.automatica.2018.07.004}
  {\path{doi:10.1016/j.automatica.2018.07.004}}.

\bibitem{RK-ADA-SC:21}
R.~Konda, A.~D. Ames, and S.~Coogan.
\newblock Characterizing safety: {M}inimal control barrier functions from
  scalar comparison systems.
\newblock {\em IEEE Control Systems Letters}, 5(2):523--528, 2021.
\newblock \href {http://dx.doi.org/10.1109/LCSYS.2020.3003887}
  {\path{doi:10.1109/LCSYS.2020.3003887}}.

\bibitem{PM-AA-JC:23}
P.~Mestres, A.~Allibhoy, and J.~Cortés.
\newblock Robinson’s counterexample and regularity properties of
  optimization-based controllers.
\newblock {\em arXiv e-print:2311.13167}, 2023.
\newblock URL: \url{https://arxiv.org/abs/2311.13167}.

\bibitem{PM-JC:23}
P.~Mestres and J.~Cortés.
\newblock Optimization-based safe stabilizing feedback with guaranteed region
  of attraction.
\newblock {\em IEEE Control Systems Letters}, 7:367--372, 2023.
\newblock \href {http://dx.doi.org/10.1109/LCSYS.2022.3188934}
  {\path{doi:10.1109/LCSYS.2022.3188934}}.

\bibitem{BJM-MJP-ADA:15}
B.~J. Morris, M.~J. Powell, and A.~D. Ames.
\newblock Continuity and smoothness properties of nonlinear optimization-based
  feedback controllers.
\newblock In {\em {IEEE} Conf.\ on Decision and Control}, pages 151--158, 2015.
\newblock \href {http://dx.doi.org/10.1109/CDC.2015.7402101}
  {\path{doi:10.1109/CDC.2015.7402101}}.

\bibitem{IP-TT:07}
I.~P{\'o}lik and T.~Terlaky.
\newblock A survey of the {S}-lemma.
\newblock {\em SIAM Review}, 49(3):371--418, 2007.
\newblock \href {http://dx.doi.org/10.1137/S003614450444614X}
  {\path{doi:10.1137/S003614450444614X}}.

\bibitem{AVP-AD-FB:22a}
A.~V. Proskurnikov, A.~Davydov, and F.~Bullo.
\newblock The {Yakubovich} {S-Lemma} revisited: {Stability} and contractivity
  in non-{Euclidean} norms.
\newblock {\em SIAM Journal on Control and Optimization}, 61(4):1955--1978,
  2023.
\newblock \href {http://dx.doi.org/10.1137/22M1512600}
  {\path{doi:10.1137/22M1512600}}.

\bibitem{MFR-APA-PT:21}
M.~F. Reis, A.~P. Aguiar, and P.~Tabuada.
\newblock Control barrier function-based quadratic programs introduce
  undesirable asymptotically stable equilibria.
\newblock {\em IEEE Control Systems Letters}, 5(2):731–736, 2021.
\newblock \href {http://dx.doi.org/10.1109/LCSYS.2020.3004797}
  {\path{doi:10.1109/LCSYS.2020.3004797}}.

\bibitem{EKR-SB:16}
E.~K. Ryu and S.~Boyd.
\newblock Primer on monotone operator methods.
\newblock {\em Applied Computational Mathematics}, 15(1):3--43, 2016.

\bibitem{XT-DVD:24}
X.~Tan and D.~V. Dimarogonas.
\newblock On the undesired equilibria induced by control barrier function based
  quadratic programs.
\newblock {\em Automatica}, 159:111359, 2024.
\newblock \href {http://dx.doi.org/10.1016/j.automatica.2023.111359}
  {\path{doi:10.1016/j.automatica.2023.111359}}.

\bibitem{WW-JJES:05}
W.~Wang and J.~J. Slotine.
\newblock On partial contraction analysis for coupled nonlinear oscillators.
\newblock {\em Biological Cybernetics}, 92(1):38--53, 2005.
\newblock \href {http://dx.doi.org/10.1007/s00422-004-0527-x}
  {\path{doi:10.1007/s00422-004-0527-x}}.

\end{thebibliography}

\end{document}